\newtheorem{theorem}{Theorem}[section]
\newtheorem{prop}[theorem]{Proposition}
\newtheorem{lemma}[theorem]{Lemma}
\newtheorem{question}[theorem]{Question}
\theoremstyle{definition}
\newtheorem{defin}[theorem]{Definition}
\newtheorem{claim}[theorem]{Claim}
\newenvironment{poc}{\begin{proof}[Proof of claim]}{\end{proof}}
\newcommand{\eps}{\varepsilon}
\newcommand{\bN}{\mathbb{N}}
\newcommand{\bZ}{\mathbb{Z}}
\newcommand{\cB}{\mathcal{B}}
\newcommand{\cP}{\mathcal{P}}
\newcommand*{\abs}[1]{\lvert#1\rvert}
\definecolor{darkblue}{rgb}{0,0,0.5}
\definecolor{armygreen}{rgb}{0.29, 0.33, 0.13}
\definecolor{darkmagenta}{rgb}{0.55, 0.0, 0.55}
\definecolor{lightseagreen}{rgb}{0.13, 0.7, 0.67}
\definecolor{darktangerine}{rgb}{1.0, 0.66, 0.07}
\definecolor{deepcarmine}{rgb}{0.66, 0.13, 0.24}
\definecolor{darkblue}{rgb}{0.0, 0.0, 0.55}
\title{Nested cycles with no geometric crossings}
 \author{
 	Irene Gil Fern\'andez
 	\thanks{Mathematics Institute, University of Warwick, UK. Email: {\tt irene.gil-fernandez@warwick.ac.uk}.}
\and
 	Jaehoon Kim
 	\thanks{Department of Mathematical Sciences, KAIST, South Korea. Email: {\tt jaehoon.kim@kaist.ac.kr}. J.K. was supported by the POSCO Science Fellowship of POSCO TJ Park Foundation and by the KAIX Challenge program of KAIST Advanced Institute for Science-X.}
\and
 	Younjin Kim
 	\thanks{Institute of Mathematical Sciences, Ewha Womans University, South Korea. Email: {\tt younjinkim@ewha.ac.kr}. Y.K. was supported by Basic Science Research Program through the National Research Foundation of Korea(NRF) funded by the Ministry of Education (2017R1A6A3A04005963).}
\and
	Hong Liu
	\thanks{Mathematics Institute and DIMAP, University of Warwick, UK.  Email: {\tt h.liu.9@warwick.ac.uk}. H.L. was supported by the UK Research and Innovation Future Leaders Fellowship MR/S016325/1.
	}
}
\date{\today}
\begin{document}
	\maketitle
	
\begin{abstract}
	In 1975, Erd\H{o}s asked the following question: what is the smallest function~$f(n)$ for which all graphs with $n$ vertices and $f(n)$ edges contain two edge-disjoint cycles $C_1$ and $C_2$, such that the vertex set of $C_2$ is a subset of the vertex set of $C_1$ and their cyclic orderings of the vertices respect each other? We prove the optimal linear bound $f(n)=O(n)$ using sublinear expanders.
\end{abstract}
	
\section{Introduction}\label{sec-intro}
Extremal problems involving cycles have been extensively studied. In particular, what kinds of cycles can we find in graphs with large (but constant) average/minimum degree? A classical result of this sort is the Corradi-Hajnal theorem~\cite{CH63} from 1963, stating that any graph $G$ with minimum degree $\delta(G)\ge 2k$ and $|G|\ge 3k$ contains $k$ pairwise vertex-disjoint cycles. This was later extended to cycles of the same length. Egawa~\cite{Ega96}, improving an earlier result of H\"aggkvist~\cite{Hag85}, showed that large graphs with $2k$ minimum degrees contain $k$ pairwise vertex-disjoint cycles with the same length. 
By viewing cycles as minimal graphs of connectivity two or minimum degree two, these results were also generalized in the direction of finding disjoint subgraphs of certain minimum degree or connectivity in \cite{Haj, KO, Sti96, Thom}. 
See also a result of Verstra\"ete~\cite{Ver02} for vertex-disjoint cycles whose lengths form an arithmetic progression in graphs with large average degree.

In this note, we are interested in finding cycles with geometric constraints. Cycles $C_1,\dots,C_k$ in a graph $G$ are said to be \emph{nested cycles} if the vertex set of $C_{i+1}$ is a subset of the vertex set of $C_i$ for each $i\in[k-1]$. If, in addition, their edge sets are pairwise disjoint, we say they are \emph{edge-disjoint nested cycles} (see Figure~\ref{fig-nested-cyc}). In 1975, Erd\H{o}s~\cite{Erdos75} conjectured that there is a constant $c$ such that graphs with $n$ vertices and at least $cn$ edges must contain two edge-disjoint nested cycles. Bollob\'as~\cite{Bol78} proved this conjecture and asked for an extension to $k$ edge-disjoint nested cycles. This was confirmed later in 1996 by Chen, Erd\H{o}s and Staton~\cite{CES94}, who showed that $O_k(n)$ edge forces $k$ edge-disjoint nested cycles.

A stronger conjecture of Erd\H{o}s that also appeared in~\cite{Erdos75} is that there exists a constant~$C$ such that graphs with $n$ vertices and at least $Cn$ edges must contain two edge-disjoint nested cycles such that, geometrically, the edges of the inner cycle do not cross each other, in other words, if $C_1=v_1\dots v_{\ell_1}$, then $C_2$ has no two edges $v_iv_{i'}$ and $v_jv_{j'}$ with $i<j<i'<j'$. In this case, $C_1$ and~$C_2$ are said to be two \textit{nested cycles without crossings} (see Figure~\ref{fig-nested-cyc-no-cro}). The proof of the above nested cycles result of Chen, Erd\H{o}s and Staton proceeds by finding a cycle $C$ in a graph $G$ with average degree $d$ such that the average degree of the subgraph $H\subseteq G $ induced on $V(C)$ grows with $d$. One can then iterate this to get nested cycles. This argument, however, does not say anything about the shape of the cycles that can be found in $H$.

\begin{figure}[h]
	\centering
	\begin{subfigure}{0.45\linewidth}
		\centering
		\begin{tikzpicture}[scale=1.3]
		\node[inner sep= 1pt](a1) at (0,-0.1)[circle,fill]{};
		\node[inner sep= 1pt](a2) at (0.5,-0.1)[circle,fill]{};
		\node[inner sep= 1pt](a3) at (1,0.3)[circle,fill]{};
		\node[inner sep= 1pt](a4) at (1,0.8)[circle,fill]{};
		\node[inner sep= 1pt](a5) at (0.5,1.1)[circle,fill]{};
		\node[inner sep= 1pt](a6) at (0,1.1)[circle,fill]{};
		\node[inner sep= 1pt](a7) at (-0.5,0.8)[circle,fill]{};
		\node[inner sep= 1pt](a8) at (-0.5,0.3)[circle,fill]{};
		\draw[darkblue,thick] (a1)--(a2);
		\draw[darkblue,thick] (a2)--(a3);
		\draw[darkblue,thick] (a3)--(a4);
		\draw[darkblue,thick] (a4)--(a5);
		\draw[darkblue,thick] (a5)--(a6);
		\draw[darkblue,thick] (a6)--(a7);
		\draw[darkblue,thick] (a7)--(a8);
		\draw[darkblue,thick] (a8)--(a1);
		\draw[red] (a1)--(a3);
		\draw[red] (a3)--(a7);
		\draw[red] (a7)--(a2);
		\draw[red] (a2)--(a4);
		\draw[red] (a4)--(a1);
		\node[inner sep= 1pt,darkblue](a9) at (1.3,0.5){\small$C_1$};
		\node[inner sep= 1pt,red](a0) at (0.5,0.7){\small$C_2$};
	\end{tikzpicture}
\caption{Two edge-disjoint nested cycles}\label{fig-nested-cyc}
	\end{subfigure}
	\begin{subfigure}{0.45\linewidth}
	\centering
	\begin{tikzpicture}[scale=1.3]
	\node[inner sep= 1pt](a1) at (0,-0.1)[circle,fill]{};
	\node[inner sep= 1pt](a2) at (0.5,-0.1)[circle,fill]{};
	\node[inner sep= 1pt](a3) at (1,0.3)[circle,fill]{};
	\node[inner sep= 1pt](a4) at (1,0.8)[circle,fill]{};
	\node[inner sep= 1pt](a5) at (0.5,1.1)[circle,fill]{};
	\node[inner sep= 1pt](a6) at (0,1.1)[circle,fill]{};
	\node[inner sep= 1pt](a7) at (-0.5,0.8)[circle,fill]{};
	\node[inner sep= 1pt](a8) at (-0.5,0.3)[circle,fill]{};
	\draw[darkblue,thick] (a1)--(a2);
	\draw[darkblue,thick] (a2)--(a3);
	\draw[darkblue,thick] (a3)--(a4);
	\draw[darkblue,thick] (a4)--(a5);
	\draw[darkblue,thick] (a5)--(a6);
	\draw[darkblue,thick] (a6)--(a7);
	\draw[darkblue,thick] (a7)--(a8);
	\draw[darkblue,thick] (a8)--(a1);
	\draw[red] (a1)--(a3);
	\draw[red] (a3)--(a5);
	\draw[red] (a5)--(a7);
	\draw[red] (a7)--(a1);
	\node[inner sep= 1pt,darkblue](a9) at (1.3,0.5){\small$C_1$};
	\node[inner sep= 1pt,red](a0) at (0.5,0.7){\small$C_2$};
\end{tikzpicture}
\caption{Two nested cycles without crossings}\label{fig-nested-cyc-no-cro}
	\end{subfigure}
\end{figure}

Here we prove this conjecture allowing us to find nested cycles without crossings.

\begin{theorem}\label{thm-main-result}
	There exists a constant $C>0$ such that every graph $G$ with average degree at least $C$ has two nested cycles without crossings.
\end{theorem}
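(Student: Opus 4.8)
The natural strategy is to use sublinear expanders (as advertised in the abstract). The plan is to first pass to a subgraph that is a sublinear expander of large minimum degree: a standard result of Komlós–Szemerédi says that a graph with average degree $\ge C$ contains a subgraph $H$ that expands in the sublinear sense (every not-too-large set $S$ has $|N_H(S)| \ge \eps(|S|) |S|$ for a slowly decaying function $\eps$) and has minimum degree $\delta(H) \ge d$ where $d \to \infty$ as $C \to \infty$. So from now I would work inside such an $H$. The key feature of sublinear expanders I want to exploit is that between any two sets of size $\ge m$, one can find a path of length polylogarithmic in $|H|$ avoiding any set of $\le m$ forbidden vertices; equivalently, $H$ has small diameter after deleting few vertices, and one can build connected subgraphs (``balls'') economically.

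**Building the outer cycle and a dense bipartite-like structure inside it.**

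The heart of the matter: I need a cycle $C_1 = v_1 \dots v_{\ell_1}$ such that, on its vertex set, there are many more edges that I can use to route the inner cycle $C_2$ \emph{without crossings} — meaning $C_2$'s edges, drawn as chords of the convex polygon with $v_1,\dots,v_{\ell_1}$ in this cyclic order, are pairwise noncrossing. A noncrossing cycle on a convex point set is exactly a cycle whose chords form a ``planar'' pattern; concretely, if $C_2$ visits $v_{i_1}, v_{i_2}, \dots, v_{i_t}$ then noncrossing-ness plus being a single cycle forces a very restricted structure (for instance, $C_2$ looks like a sequence of ``nested fans'' or can be taken to be of the form where consecutive indices along $C_1$ are joined, i.e. $C_2$ uses only short chords $v_i v_{i+1}$ plus a bounded number of long ones). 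The cleanest target is: find on $V(C_1)$ a second cycle that is the union of a sub-path of $C_1$ and one extra ``jump'' chord, or more robustly, find $k=3$ (say) vertices on $C_1$ that are pairwise joined by internally disjoint paths whose interiors lie in consecutive arcs of $C_1$ — a theta-like configuration that lies in the outer region and hence draws without crossings. I would aim to produce $C_1$ together with a long sub-path $P$ of $C_1$ and a single edge (or short path using new interior vertices not on $C_1$) connecting the two endpoints of $P$ — but such an edge/path lying outside would not help since $C_2$ must use only vertices of $C_1$. So the correct approach: find $C_1$ such that $H[V(C_1)]$ still has large minimum degree (this is the Chen–Erdős–Staton type statement, reproved via expanders), and then within $H[V(C_1)]$ locate a noncrossing cycle directly.

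**Finding a noncrossing cycle inside a dense subgraph of a cyclically ordered vertex set.**

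Given a graph $F$ on vertices $v_1,\dots,v_{\ell}$ arranged cyclically with $\delta(F)$ large, I want a noncrossing cycle. Here is the combinatorial lemma I would isolate: if $\delta(F) \ge 3$ then $F$ contains a noncrossing cycle with respect to \emph{any} cyclic order. Proof idea: take any edge $v_a v_b$ with the arc from $a$ to $b$ (one of the two) as short as possible among all edges; if that shortest arc has length $1$ then ... we need a cycle, not just one short edge. Better: consider the ``interval graph'' perspective — a noncrossing cycle corresponds to a way of triangulating/dissecting and is essentially an \emph{outerplanar} subgraph that is $2$-connected; a classical fact is that a $2$-connected outerplanar graph on these vertices in this cyclic order is exactly a cycle-with-noncrossing-chords, so I want to find a $2$-connected subgraph of $F$ that is outerplanar in the induced order. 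With $\delta(F) \ge C$ one can be greedy: repeatedly, from the current partial noncrossing structure, extend using a neighbor inside the currently ``free'' arc. Concretely I would order things and do a DFS-type argument: start at $v_1$, always move to the neighbor with the smallest index strictly greater than the current ``frontier'', which keeps all used chords noncrossing, and since degrees are large we cannot get stuck before returning to close a cycle.

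**The main obstacle.**

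The hardest part is guaranteeing that the outer cycle $C_1$ can be chosen so that $H$ restricted to $V(C_1)$ still has large minimum degree (or enough edges), \emph{and} that these edges are usable without crossings with respect to the specific cyclic order inherited from $C_1$ — the two requirements pull against each other, because the cycle $C_1$ produced by expander routing is long and its cyclic order is not something we control. I expect the resolution to be: rather than first fixing $C_1$ and then searching inside, build $C_1$ and $C_2$ \emph{together}. Namely, find a long path $P = u_0 u_1 \dots u_m$ in $H$, and a second path $Q$ with the same endpoints, internally disjoint from $P$, such that $V(Q) \subseteq V(P)$ and $Q$ visits the $u_i$'s in ``monotone-with-few-jumps'' order — then $C_1 = P \cup (u_0 u_m\text{-path through few external vertices})$... no: instead let $C_1$ be a cycle through $V(P)$ in the order $u_0,\dots,u_m$, which exists if $u_0 u_m \in E(H)$ or can be closed by a short expander path using vertices of $P$ cleverly; and $C_2 = Q$. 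So the real engine is a lemma: in a sublinear expander of min-degree $d$, there is a cycle $C_1$ and a noncrossing cycle $C_2$ on $V(C_1)$ — and I would prove it by finding two long ``parallel'' paths between the same pair of vertices where one path's vertex set contains the other's in a consistent cyclic sense, using the expansion to build the first path and then re-route a second path that ``shadows'' the first; the crossing-freeness comes from making the second path locally advance along the first. Controlling that shadowing — ensuring the re-routed path stays on $V(C_1)$ and advances monotonically enough to avoid crossings, while expansion normally lets paths wander — is where the essential difficulty lies, and I would expect to spend most of the effort on a ``monotone connector'' lemma inside expanders.
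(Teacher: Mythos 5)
There is a genuine gap, on two counts. First, the one concrete combinatorial lemma you propose --- that a graph $F$ on a cyclically ordered vertex set with $\delta(F)\ge 3$ (or $\delta(F)\ge C$) contains a noncrossing cycle with respect to \emph{any} cyclic order --- is false. Take $v_1,\dots,v_n$ in convex position and join each $v_i$ to $v_{i+\lfloor n/2\rfloor},\dots,v_{i+\lfloor n/2\rfloor+d-1}$ (indices mod $n$), so every edge is a near-antipodal chord and $\delta\ge d$. In any noncrossing cycle, the edge $uv$ whose shorter arc is minimal forces the entire cycle into the closed shorter arc of $uv$; but then the cycle-neighbour of $u$ must lie within distance $d$ of $v$ and the cycle-neighbour of $v$ within distance $d$ of $u$, and these two edges interleave. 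So no constant minimum degree on $V(C_1)$, however large, can finish the job, and the Chen--Erd\H{o}s--Staton route (find $C_1$ with $H[V(C_1)]$ dense, then search inside) cannot work as you describe --- which you partly sense, but your fallback (``shadowing'', a ``monotone connector'' lemma) is exactly the unsolved part and is left as a sketch of a difficulty rather than an argument.

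The missing idea is a reversal of the embedding order: construct the \emph{inner} cycle first, not the outer one. The paper takes $C$ to be a \emph{shortest} cycle in the expander (so $|C|\le\log n$, and balls grown from its vertices meet $C$ only $O(i)$ times at radius $i$, which is what makes the expansion machinery applicable), attaches to each vertex $v_i$ of $C$ two disjoint ``arms'' $R_{i,1},R_{i,2}$ ending in large low-diameter sets $A_{i,1},A_{i,2}$ (a structure they call a kraken), and then links $A_{i,2}$ to $A_{i+1,1}$ for each $i$ by short, pairwise disjoint expander paths. The concatenation of arms and linking paths is the outer cycle, on which $v_1,\dots,v_k$ appear in exactly this cyclic order; the inner cycle is $C$ itself, and its edges $v_iv_{i+1}$ join consecutive marked vertices of the outer cycle, so noncrossing-ness is automatic rather than something to be extracted from a dense induced subgraph. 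All of the technical work then goes into building the kraken (handling the dichotomy between many and few high-degree vertices), none into a monotonicity argument for paths. Your proposal never arrives at this inversion, so the core of the proof is missing.
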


Our proof utilises a notion of sublinear expanders (see Section~\ref{subsec-robust-expander}), which plays an important role in some recent resolutions of long-standing conjectures, see e.g.~\cite{HKL20,KHSS17,MadLM,EH-LM}. Our embedding strategy goes in reverse order. That is, we will find the inner cycle first and then embed the outer cycle in such a way that there is no geometric crossing. This is made possible via a structure we call \emph{kraken} (see Definition~\ref{def-kraken}). The bulk of the work is to construct a kraken in a graph with large but constant average degree. 

It would be interesting to see if large constant average degree can also force $O(1)$ many nested cycles without crossings. It seems that new ideas are needed for such an extension (if it is true!).

\begin{question}
	Given $k\in\bN$, does there exist $f(k)$ such that every graph with average degree $f(k)$ contains $k$ nested cycles without geometric crossings?
\end{question}

\medskip

\noindent\textbf{Organisation}. After laying out the tools needed in Section~\ref{sec-preliminaries}, Theorem~\ref{thm-main-result} will be proved in Section~\ref{sec-main-result}, assuming that we have a kraken on our side on the battlefield. Then in Section~\ref{sec:kraken}, we show how to summon such a creature.


	
\section{Preliminaries}\label{sec-preliminaries}

For $n\in\mathbb{N}$, let $[n]:=\{1,\dots,n\}$. If we claim that a result holds for $0<a\ll b,c\ll d<1$, it means that there exist positive functions $f,g$ such that the result holds as long as $a<f(b,c)$ and $b<g(d)$ and $c<g(d)$. We will not compute these functions explicitly. In many cases, we treat large numbers as if they are integers, by omitting floors and ceilings if it does not affect the argument. We write $\log$ for the base-$e$ logarithm.

\subsection{Graphs notation}\label{subsec-graph-notation}

Given a graph $G$, denote its average degree $2e(G)/|G|$ by $d(G)$. Let $F\subseteq G$ and $H$ be graphs, and $U\subseteq V(G)$. We write $G[U]\subseteq G$ for the induced subgraph of $G$ on vertex set $U$. Denote by $G\cup H$  the graph with vertex set $V(G)\cup V(H)$ and edge set $E(G)\cup E(H)$, and write $G-U$ for the induced subgraph $G[V(G)\setminus U]$, and $G\setminus F$ for the spanning subgraph of $G$ obtained from removing the edge set of $F$. For a set of vertices $X\subseteq V(G)$ and $i\in\mathbb{N}$, the distance in $G$ between $X$ and a vertex $u$ is the length of a shortest path from $u$ to $X$;
denote
$$
N_G^i(X):=\{u\in V(G):\mbox{ the distance in $G$ between $X$ and $u$ is exactly $i$}\},
$$
and write $N_G^0(X)=X$, $N_G(X):=N_G^1(X)$, and for $i\in\mathbb{N}\cup\{0\}$, let $B_G^i(X)=\bigcup_{j=0}^iN_G^j(X)$ be the ball of radius $i$ around $X$. When the underlying graph $G$ is clear from the context, we omit the subscript $G$ and simply write $N(X), N^i(X), B^i(X)$.
For a path $P$, we write $\ell(P)$ for its length, which is the number of edges in the path. 

\subsection{Sublinear expander}\label{subsec-robust-expander}

Our proof makes use of the sublinear expander introduced by Koml\'os and Szemer\'edi \cite{KS96}. Roughly speaking, a sublinear expander is a graph in which all sets of reasonable size expand by a sublinear factor. This weak expansion and its consequence that large sets are linked by a short path drive our whole embedding argument.

We shall use the following extension by Haslegrave, Kim and Liu~\cite{HKL20}.

\begin{defin}\label{def-expander}
	Let $\varepsilon_{1}>0$ and $k\in\bN$. A graph $G$ is an $(\varepsilon_{1},k)$-\textit{expander} if for all $X\subset V(G)$ with $k/2\leq |X|\leq |G|/2$, and any subgraph $F\subseteq G$ with $e(F)\leq d(G)\cdot \eps(|X|)|X|$, we have
	$$
	|N_{G\setminus F}(X)| \geq \varepsilon(|X|)\cdot|X|,
	$$
	where
	$$
	\varepsilon(x)=\varepsilon\left(x, \varepsilon_{1}, k\right)=\left\{\begin{array}{cc}
	0 & \text { if } x<k / 5, \\
	\varepsilon_{1} / \log ^{2}(15 x / k) & \text { if } x \geq k / 5.
	\end{array}\right.
	$$ 
\end{defin}
When the parameters $\eps_1$ and $k$ are clear from the context, we will omit them and write simply $\eps(x)$.
Note that when $x\geq k/2$, $\eps(x)$ is decreasing, while $\eps(x)\cdot x$ is increasing.

Though the expansion rate of the expander above is only sublinear, the advantage of this notion is that every graph contains one such sublinear expander subgraph with almost the same average degree.

\begin{theorem}[\cite{HKL20}, Lemma 3.2]\label{thm-pass-to-expander}
	There exists some $\eps_1>0$ such that the following holds for every $k>0$. Every graph $G$ has an $(\eps_1,k)$-expander subgraph $H$ with $d(H)\geq d(G)/2$ and $\delta(H)\geq d(H)/2$.
\end{theorem}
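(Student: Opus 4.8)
The statement is the standard ``pass to an expander'' lemma, and I would establish it by the iterative argument of Koml\'os and Szemer\'edi: repeatedly replace the current graph by a subgraph which is either \emph{cleaner} or strictly \emph{smaller}, and stop at a subgraph that is simultaneously an $(\eps_1,k)$-expander and has large minimum degree. The cleaning operation is the easy ingredient: starting from any subgraph and repeatedly deleting a vertex whose degree is below half of the \emph{current} average degree strictly increases the average degree at each step, hence terminates, at a subgraph $H$ with $\delta(H)\ge d(H)/2$ and $d(H)$ no smaller than what we started with. So cleaning is ``free'', and I would keep the current graph cleaned at all times.

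The main loop: let $H$ be the current cleaned graph. If $H$ is an $(\eps_1,k)$-expander, stop --- it automatically has $\delta(H)\ge d(H)/2$, and the accounting below guarantees $d(H)\ge d(G)/2$. Otherwise pick a witness of non-expansion: $X$ with $k/2\le\abs{X}\le\abs{H}/2$ and $F\subseteq H$ with $e(F)\le d(H)\,\eps(\abs{X})\,\abs{X}$ and $\abs{N_{H\setminus F}(X)}<\eps(\abs{X})\,\abs{X}$; put $U:=X\cup N_{H\setminus F}(X)$, so $\abs{U}<(1+\eps(\abs{X}))\abs{X}\le\tfrac{1+\eps(\abs{X})}{2}\abs{H}$. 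Move to whichever of the strictly smaller subgraphs $H[X]$, $H[U]$, $H-X$ is appropriate --- roughly $H[U]$ when $X$ is sparse into itself, $H[X]$ when it is dense, and $H-X$ when $\abs{X}$ is a constant fraction of $\abs{H}$ --- then re-clean and repeat. The key quantitative claim is that in each case the chosen subgraph has average degree at least $d(H)\bigl(1-O(\eps(\abs{X}))\bigr)$ while being strictly smaller (and at most a constant fraction of $\abs{H}$ whenever it is $H[U]$ or $H[X]$). Granting this, there are only $O\!\left(\log(\abs G/k)\right)$ cut-steps (the cleaning steps are harmless), the cut sizes $\abs{X_i}$ meet each dyadic window $[2^jk,2^{j+1}k)$ only boundedly often, and so the total multiplicative loss is
\[
\prod_i\bigl(1-O(\eps(\abs{X_i}))\bigr)\ \ge\ 1-O\!\Bigl(\textstyle\sum_{j\ge0}\tfrac{\eps_1}{\log^2(15\cdot 2^{j})}\Bigr)\ =\ 1-O\!\Bigl(\eps_1\textstyle\sum_{j\ge1}j^{-2}\Bigr)\ =\ 1-O(\eps_1)\ \ge\ \tfrac12
\]
once $\eps_1$ is small enough; it is exactly the \emph{quadratic} (rather than single) logarithm in the definition of $\eps$ that makes this series converge. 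So the loop terminates with an $(\eps_1,k)$-expander $H$ satisfying $\delta(H)\ge d(H)/2\ge d(G)/4$ and $d(H)\ge d(G)/2$.

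The step I expect to be the main obstacle is this quantitative case analysis: showing that one of $H[X]$, $H[U]$, $H-X$ retains a $\bigl(1-O(\eps(\abs{X}))\bigr)$-fraction of the current average degree, and not merely a constant fraction, which would compound to $0$ over the $\Theta(\log(\abs G/k))$ steps. The delicate regime is a small bad set $X$ (of size $\Theta(k)$) inside a large host $H$: deleting $X$ barely shrinks $\abs{H}$, so peeling off many such sets in succession would be ruinous, and one has to argue instead that here $H[U]$ --- which, as $\abs{U}\approx\abs{X}$ while $H[U]$ retains essentially all the edges incident to $X$, collapses to a small graph in one step --- is already dense enough to be taken next. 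A secondary nuisance, to be carried throughout, is that the budget $e(F)\le d(H)\eps(\abs{X})\abs{X}$ is measured against the \emph{current} average degree, which can exceed $d(G)$.
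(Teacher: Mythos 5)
The paper does not prove this statement --- it is quoted verbatim from Haslegrave--Kim--Liu \cite{HKL20} (Lemma 3.2), which in turn adapts Koml\'os--Szemer\'edi \cite{KS96}. So I can only compare your sketch against that known proof, and there the comparison reveals a genuine gap: your iterative scheme does not close, and the step you yourself flag as ``the main obstacle'' is precisely where it breaks. The claim that one of $H[X]$, $H[U]$, $H-X$ always retains a $\bigl(1-O(\eps(\abs{X}))\bigr)$-fraction of $d(H)$ is false for the first two options. Take $X$ to be a small near-isolated blob (say $\abs{X}=\Theta(k)$) consisting entirely of vertices of degree exactly $\delta(H)=d(H)/2$, with almost all of their edges internal to $X$ and $\abs{N_{H\setminus F}(X)}<\eps(\abs{X})\abs{X}$. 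Then $e(H[U])\approx e_H(X)\approx\tfrac12\sum_{v\in X}\deg_H(v)\approx\tfrac{d(H)}{4}\abs{X}$, so both $H[U]$ and $H[X]$ have average degree only about $d(H)/2$: a factor-$2$ loss in a single step, which cannot be afforded more than once. The only subgraph that loses little here is $H-X$, and your scheme forbids taking it when $\abs{X}=o(\abs{H})$. Your dyadic bookkeeping is also unjustified independently of this: nothing prevents \emph{every} witness from satisfying $\abs{X_i}=\Theta(k)$, so even with only $O(\log(\abs{G}/k))$ geometric-shrinkage steps the product of losses would be $\bigl(1-\Theta(\eps_1)\bigr)^{\Theta(\log(\abs{G}/k))}=(\abs{G}/k)^{-\Theta(\eps_1)}\to 0$; the windows are indexed by $\abs{X_i}$, not by $\abs{H_i}$, and the former need not sweep through distinct scales.

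The actual argument resolves both problems at once by replacing the loop with a single extremal choice: pick $H$ maximising a weighted density $d(H)\,\mu(\abs{H})$, where $\mu(x)=\exp\bigl(c\eps_1/\log(15x/k)\bigr)$ (decreasing in $x$, with $\mu(k)/\mu(\abs{G})\le 2$, whence $d(H)\ge d(G)/2$; and any vertex of degree below $d(H)/2$ could be deleted to increase the functional, whence $\delta(H)\ge d(H)/2$). If $H$ were not an expander with witness $(X,F)$, then since at most $e(F)\le \eps(\abs{X})\abs{X}\,d(H)$ edges join $X$ to $V(H)\setminus U$, one has $e(H[U])+e(H-X)\ge e(H)-e(F)$, and an averaging computation shows that at least one of $H[U]$, $H-X$ beats $H$ in the functional --- a contradiction. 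The crucial quantitative point your sketch misses is that the tolerable loss for the peeling option $H-X$ is $O\bigl(\eps(\abs{H})\cdot\abs{X}/\abs{H}\bigr)$, i.e.\ proportional to the \emph{fraction} of vertices removed, not merely $O(\eps(\abs{X}))$; it is this extra factor $\abs{X}/\abs{H}$ that makes the losses telescope (within each dyadic window of $\abs{H}$ the removed fractions sum to $O(1)$), and the weighted functional is exactly the device that encodes it. Without either that sharper per-step estimate or the functional, the iteration as you describe it cannot be made to terminate with $d(H)\ge d(G)/2$.
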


Thanks to this theorem, by passing to an expander subgraph, it suffices to prove Theorem~\ref{thm-main-result} for expanders. A key property~\cite[Corollary 2.3]{KS96} of expanders that we will use is that there exists a short path between any two sufficiently large sets. This is formalised in the following statement.

\begin{lemma}\label{lem-short-diam}
	Let $\varepsilon_{1},k>0$. If $G$ is an $n$-vertex $(\varepsilon_{1},k)$-expander, then any two vertex sets $X_1,X_2$, each of size at least $x\geq k$, are at distance of at most $m=\frac{1}{\varepsilon_{1}}\log^3(15n/k)$ apart. This remains true even after deleting $\varepsilon(x,\eps_1,k)\cdot x/4$ vertices from $G$.
\end{lemma}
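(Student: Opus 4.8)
The plan is to prove Lemma~\ref{lem-short-diam} by the standard ball-growing argument for sublinear expanders, iterating the expansion property from Definition~\ref{def-expander} until the balls around $X_1$ and around $X_2$ each swallow more than half of $V(G)$, at which point they must intersect. First I would fix $X_1$ (the argument for $X_2$ is symmetric) and let $Y$ be the set of vertices we are allowed to delete, $|Y|\le \eps(x,\eps_1,k)\cdot x/4$; set $G'=G-Y$ and track the sets $B_{G'}^i(X_1)$ for $i=0,1,2,\dots$. At each step, as long as $k/2\le |B_{G'}^i(X_1)|\le |G|/2$, I want to apply the $(\eps_1,k)$-expander property with $F$ empty (we are only deleting vertices, not edges, so $e(F)=0\le d(G)\eps(|X|)|X|$ trivially) to the set $X=B_{G'}^i(X_1)$ inside $G$; this gives $|N_G(X)|\ge \eps(|X|)\,|X|$. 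The subtlety is that we work in $G'$, not $G$: the deleted set $Y$ could intersect $N_G(X)$, so I only get $|B_{G'}^{i+1}(X_1)| \ge |B_{G'}^i(X_1)| + \eps(|X|)\,|X| - |Y|$. This is where the slack $|Y|\le \eps(x)x/4$ is used: since $|B_{G'}^i(X_1)|\ge |X_1|\ge x\ge k$ and $\eps(\cdot)\cdot(\cdot)$ is increasing while $\eps(\cdot)$ is decreasing for arguments $\ge k/2$, we have $\eps(|X|)\,|X|\ge \eps(x)\,x$ roughly, so the net gain per step is at least, say, $\tfrac34\eps(|B_{G'}^i(X_1)|)\,|B_{G'}^i(X_1)|$, which is still a genuine multiplicative expansion by a factor $1+\Omega(\eps(\cdot))$.

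Next I would bound the number of iterations needed. Because each step multiplies the ball size by a factor at least $1+c\,\eps_1/\log^2(15\cdot|G|/k)$ for an absolute constant $c$ (using that $\eps(y)\ge \eps_1/\log^2(15n/k)$ for all $y\le n$, as $\eps$ is decreasing), and we need to go from size $\ge k$ up to size $>n/2$, the number of steps is at most
$$
\frac{\log(n/(2k))}{\log\!\bigl(1+c\,\eps_1/\log^2(15n/k)\bigr)} \le \frac{1}{\eps_1}\log^3(15n/k)/2
$$
after absorbing constants and using $\log(1+t)\ge t/2$ for small $t$; in particular at most $m/2$ steps. So $B_{G'}^{\lceil m/2\rceil}(X_1)$ and $B_{G'}^{\lceil m/2\rceil}(X_2)$ each have size $>n/2$ (if the process does not terminate earlier — but it cannot exceed $|G|/2$ without already having exceeded it, so at the last relevant step the ball has size $>n/2$; one should phrase the stopping condition as ``$|B_{G'}^i|>|G|/2$'' and check the expander hypothesis $|X|\le|G|/2$ holds at every step before that). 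Two subsets of $V(G')$ each of size $>n/2=|G|/2\ge |G'|/2$ must share a vertex, hence there is a vertex within distance $\lceil m/2\rceil$ of $X_1$ and within $\lceil m/2\rceil$ of $X_2$ in $G'$, giving a path of length at most $m$ between $X_1$ and $X_2$ avoiding $Y$, as required.

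The details I would be careful about are mostly bookkeeping: (i) making the per-step loss $|Y|$ genuinely sublinear relative to the per-step gain — this works because $|Y|$ is bounded by $\tfrac14\eps(x)x$ once and for all, while the gain $\eps(|B^i|)|B^i|$ only grows as $|B^i|$ grows, so the ratio loss/gain stays below $\tfrac14$ throughout; (ii) handling the first step, where $|X_1|$ might be as small as $k$, so $|X_1|\ge k/2$ and the expander property applies, and $\eps(|X_1|)\,|X_1|\ge \eps_1 k/(5\log^2 3)>0$; (iii) the very last step, where the ball might jump from just below $|G|/2$ to something larger — that is fine, we only need it to exceed $|G|/2$. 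The only real ``obstacle'' is (i), i.e.\ verifying that deleting up to $\eps(x)x/4$ vertices does not stall the expansion; everything else is the textbook diameter estimate for Koml\'os--Szemer\'edi expanders, and indeed this is essentially \cite[Corollary 2.3]{KS96} restated with an explicit deletion allowance, so I would keep the write-up short and reference that the computation is standard.
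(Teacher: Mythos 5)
The paper does not prove this lemma itself --- it is quoted from \cite[Corollary~2.3]{KS96} (in the robust form used in the later sublinear-expander papers) --- and your ball-growing, meet-in-the-middle argument is exactly the standard proof of that statement. The structural parts are fine: taking $F=\varnothing$, absorbing the deleted set $Y$ into the per-step loss, using that $\eps(y)y$ is increasing to see that the loss $|Y|\le\tfrac14\eps(x)x$ is at most a quarter of every step's gain, and stopping once both balls exceed $n/2$.

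The one place your write-up does not actually deliver the lemma as stated is the iteration count. Using the uniform bound $\eps(y)\ge\eps_1/\log^2(15n/k)$ for all $y\le n$, together with $\log(1+t)\ge t/2$ and the factor $\tfrac34$ from the deletions, gives at most
$\tfrac{8}{3\eps_1}\log\bigl(n/(2k)\bigr)\log^2(15n/k)$
steps, and since $\log(n/(2k))$ is comparable to $\log(15n/k)$ when $n/k$ is large, this is roughly $\tfrac{8}{3\eps_1}\log^3(15n/k)$ --- about five times $m/2$. So the displayed inequality ending in ``$\le\frac{1}{2\eps_1}\log^3(15n/k)$'' is false in general; ``absorbing constants'' hides a genuine loss, and your argument only yields distance $O(\eps_1^{-1}\log^3(15n/k))$ with a worse leading constant than the lemma (and the paper's explicit $m$) requires. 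The fix is the standard layered count: split the growth into phases in which the ball has size in $[2^jx,2^{j+1}x)$; phase $j$ costs at most about $1/\eps(2^{j+1}x)=\eps_1^{-1}\log^2(15\cdot 2^{j+1}x/k)$ rounds (using $\log(1+t)\ge t\log 2$ rather than $t/2$, and noting the deletion loss is negligible except in the first phase), and summing $\sum_{j\le J}(a+j\log 2)^2\sim(a+J\log2)^3/(3\log 2)$ recovers the cubic bound with the correct constant, just under $m/2$ per side. Everything downstream in the paper would survive a worse constant here, but as a proof of the stated lemma the quantitative step needs this refinement. A minor additional point: $Y$ may intersect $X_1$ itself, so the starting set is $X_1\setminus Y$ of size at least $3x/4\ge k/2$; this is harmless but worth a sentence.
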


\subsection{Robust expansions}\label{subsec-exp-growth-sets}

For our proof, we need to be able to expand a set $A$ past another set $U$ as long as $U$ does not interfere with each sphere around $A$ too much. The following notion makes this precise.

\begin{defin}
	For $\lambda>0$ and $k\in\mathbb{N}$, we say that a vertex set $U$ in a graph $G$ is~\emph{$(\lambda,k)$-thin around $A$} if $U\cap A=\varnothing$ and, for each $i\in\mathbb{N}$,
	$$
	|N_G(B_{G-U}^{i-1}(A))\cap U|\leq \lambda i^k.
	$$
\end{defin}

As shown in the lemma below, the rate of expansion for every small set is almost exponential in an expander even after deleting a thin set around it. Its proof follows essentially~\cite[Proposition 3.5]{HKL20}.

\begin{prop}\label{prop-exp-HL}
	Let $0<1/d\ll \varepsilon_1\ll 1/\lambda, 1/k$ and $1\leq r\leq\log n$. Suppose $G$ is an $n$-vertex $(\varepsilon_1,\varepsilon_1d)$-expander with $\delta(G)\ge d$, and $X,Y$ are sets of vertices with $|X|\ge 1$ and $|Y|\leq \frac{1}{4}\varepsilon(|X|)\cdot |X|$. Let $W$ be a $(\lambda,k)$-thin set around $X$ in $G-Y$. Then, for each $1\leq r\leq \log n$, we have
	$$|B^r_{G-W-Y}(X)|\geq\exp(r^{1/4}).$$
\end{prop}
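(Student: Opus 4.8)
The plan is to track the growth of the balls $B^i := B^i_{G-W-Y}(X)$ as $i$ increases, showing that each successive sphere forces at least a modest multiplicative gain so long as the ball is still of size at most $n/2$. The two obstacles to naive expansion are the set $Y$ and the set $W$: we have already restricted $|Y| \le \frac14\eps(|X|)|X|$ so that $Y$ is negligible for the expander property, and the $(\lambda,k)$-thin hypothesis on $W$ (in $G-Y$) bounds $|N_{G-Y}(B^{i-1})\cap W| \le \lambda i^k$ at each step. So the strategy is: at step $i$, apply the expander property to the set $B^{i-1}$ (when $d/2 \le |B^{i-1}| \le n/2$; the base case $|X|\ge 1$ up to size $\eps_1 d/2$ is handled separately by using $\delta(G)\ge d$ to jump the ball to size at least $\eps_1 d/2$ in a bounded number of steps, or by noting $\eps(x)=0$ is vacuous below $k/5$), obtaining $|N_{G\setminus F}(B^{i-1})| \ge \eps(|B^{i-1}|)\,|B^{i-1}|$ for any $F$ with few edges; then subtract off the at most $|Y|$ vertices lost to $Y$ and the at most $\lambda i^k$ vertices of $W$ adjacent to the current ball, to conclude
$$
|B^i| \ge |B^{i-1}| + \eps(|B^{i-1}|)\,|B^{i-1}| - |Y| - \lambda i^k.
$$

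Next I would argue that the error terms are dominated by the gain. Since $\eps(|B^{i-1}|)\,|B^{i-1}|$ is increasing in $|B^{i-1}|$ and $|B^{i-1}| \ge |B^0| = |X|$, and since $\eps_1 \ll 1/\lambda,1/k$, once the ball has grown past some threshold polynomial in $i$ the term $\lambda i^k$ is a vanishing fraction of $\eps(|B^{i-1}|)|B^{i-1}|$; similarly $|Y| \le \frac14\eps(|X|)|X|$ is a fixed quantity dominated by the growth. The cleanest way to package this is to show that for $i$ in the relevant range, $|B^i| \ge \max\{|B^{i-1}|(1 + \tfrac12\eps(|B^{i-1}|)),\ (2\lambda i^k)\cdot(\text{const})\}$, i.e. the ball always stays comfortably above the polynomial obstruction $\lambda i^k$, so that the $-\lambda i^k - |Y|$ correction only costs a constant factor in the effective expansion rate. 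Concretely I expect to prove by induction on $i$ that $|B^i| \ge (\lambda+1)\,i^{k+1}$ (say) for all $i$ up to the point where the ball reaches size $\ge \exp(r^{1/4})$, which forces the ball past every $\lambda i^k$ term and lets the expansion kick in.

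Then, with the obstructions controlled, the recursion becomes essentially $|B^i| \ge |B^{i-1}|\bigl(1 + \tfrac{c\,\eps_1}{\log^2(15|B^{i-1}|/(\eps_1 d))}\bigr)$, which is exactly the recursion analysed in~\cite[Proposition 3.5]{HKL20}. Taking logarithms and writing $t_i := \log|B^i|$, this says $t_i - t_{i-1} \gtrsim \eps_1 / t_i^{\,2}$ (up to the $\log(15/(\eps_1 d))$ shift, absorbed since $1/d \ll \eps_1$), so $t_i^3$ grows linearly in $i$ and hence $t_r \gtrsim (\eps_1 r)^{1/3}$ after $r$ steps. Since $\eps_1$ is a constant, for $r \le \log n$ this gives $\log|B^r_{G-W-Y}(X)| \ge r^{1/4}$ (the exponent $1/4$ rather than $1/3$ leaving ample room to absorb all the constant and lower-order losses, and also ensuring $|B^r| \le n$ does not become an issue until $r$ is within the allowed range, at which point we have already exceeded $\exp(r^{1/4})$; if the ball reaches size $n/2$ earlier we are trivially done since then $|B^r| \ge n/2 \ge \exp(r^{1/4})$ for $r \le \log n$ and $n$ large). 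The main obstacle is the bookkeeping in the first paragraph: verifying that the thin-set bound $\lambda i^k$ is genuinely applied to the sphere $N_{G-Y}(B^{i-1}_{G-W-Y}(X))$ and not some larger set, and that the base case gets the ball off the ground past the $\eps(x)=0$ regime using only $\delta(G)\ge d$; once the ball is polynomially large in $i$, the rest is the routine $t^3$-grows-linearly computation from~\cite{HKL20}.
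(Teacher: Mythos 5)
Your proposal follows the paper's proof in all essentials: the same one-step recursion $|B^i|\ge |B^{i-1}|+\eps(|B^{i-1}|)|B^{i-1}|-|Y|-\lambda i^k$ (with $\delta(G)\ge d$ giving $|B^1|\ge d/2$ to get past the regime where the expander property is vacuous), the same absorption of $|Y|$ via $|Y|\le\frac14\eps(|X|)|X|$ and monotonicity of $\eps(z)z$, and the same observation that the $\lambda i^k$ correction costs only a constant fraction of the expansion once the ball is large relative to $i$. The only difference is presentational: the paper solves the recursion by a two-phase induction against the explicit comparison functions $x+\frac12\eps(x)x(i-1)$ and $f(i)=\exp(i^{1/4})$, whereas you take logarithms and run the equivalent ``$t_i^3$ grows linearly'' computation.
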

\begin{proof}
	For each $i\ge 0$, let $Z_i= B^{i}_{G-W-Y}(X)$.  As $W$ is $(\lambda,k)$-thin around $X$ in $G-Y$, we obtain that for each $i\ge 0$,
	\begin{align}
		\abs{Z_{i+1}} &= \abs{Z_i} + \abs{N_G(Z_i)\setminus(Y\cup W)}\ge \abs{Z_i} + \eps(\abs{Z_i})\abs{Z_i} - \abs{Y}-\abs{N_{G-Y}(Z_i)\cap W}\nonumber \\
		&\geq \abs{Z_i} + \frac{3}{4}\eps(\abs{Z_i})\abs{Z_i} - \lambda i^k.\label{this}
	\end{align}
	Indeed, this follows from the expansion of $G$ if $|Z_i|\ge \eps_1d$; and if $|Z_i|\le \eps_1d$, it follows from $\delta(G)\ge d$. Thus we have $\abs{Z_0}=|X|$ and $|Z_1|\ge d/2\ge \eps_1d$.

	Let $x=|Z_1|\ge \eps_1d$ and define $f(z)=\exp(z^{1/4})$ and $g(z):= x + \frac{1}{2}\eps(x)x\cdot (z-1)$.
	We first use induction on $i$ to show that for each $1\le i\le \log^4x$, $\abs{Z_i}\ge g(i)$. The base case $i=1$ is trivial.
	
	Now for $1\leq i\leq\log^4x$ we use \eqref{this} together with the induction hypothesis and the facts that $\eps(z)z$ is increasing when $z\ge x$ and $\frac{1}{4}\eps(x)x > \lambda i^k$ to obtain 
	\[\abs{Z_{i+1}}\geq \abs{Z_i}+ \frac{1}{2}\eps(x)x = \abs{Z_i}+ g(i+1)-g(i)\ge g(i+1).\]
	
	We may then assume $i>\log^4x$, as $f(i)\le g(i)\le \abs{Z_i}$ when $i\le \log^4x$. Now, as $i>\log^4x$ is sufficiently large (due to $x\ge \eps_1d$), we get $\lambda i^k< \frac{ f(i)}{i^{3/4}}$.
	Finally, noting that $f(i+1)-f(i)\le \frac{f(i)}{i^{3/4}}$ and $\eps(\abs{Z_i})\abs{Z_i}\ge \eps(f(i))f(i)\ge\frac{\eps_1f(i)}{i^{1/2}}$, we have
	\begin{align*}
		\abs{Z_{i+1}}&\ge \abs{Z_i}+\frac{3}{4}\eps(\abs{Z_i})\abs{Z_i} - \lambda i^k \geq 
		\abs{Z_i}+\frac{3\eps_1f(i)}{4i^{1/2}}- \frac{ f(i)}{i^{3/4}}\\
		&\geq \abs{Z_i}+\frac{ f(i)}{i^{3/4}} \geq \abs{Z_i}+ f(i+1)-f(i)\ge f(i+1),
	\end{align*}
	as desired.
\end{proof}

We will use the following lemma \cite[Lemma 3.12]{EH-LM} to find a linear size vertex set with polylogarithmic diameter in $G$ while avoiding an arbitrary set of size $o(n/\log^2n)$. 

\begin{lemma}\label{lem-find-large-ball}
	Let $0<1/d\ll\eps_1<1$ and let $G$ be an $n$-vertex $(\eps_1,\eps_1d)$-robust-expander with $\delta(G)\geq d$. For any $W\subseteq V(G)$ with $|W|\leq \eps_1n/100\log^2n$, there is a set $B\subseteq G-W$ with size at least $n/25$ and diameter at most $100\eps_1^{-1}\log^3n$.
\end{lemma}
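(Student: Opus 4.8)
The plan is to take $B$ to be a ball $B^{R}_{G-W}(v)$ about a single well-chosen vertex $v\in V(G)\setminus W$ of radius $R\le 50\eps_1^{-1}\log^3n$: in such a ball any two vertices are joined through $v$ by a path of length at most $2R$ lying inside the ball, so $\mathrm{diam}(G[B])\le 2R\le 100\eps_1^{-1}\log^3n$, and it then remains only to arrange $|B|\ge n/25$.

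First I would pin down the global shape of $G-W$. If some union $U$ of components of $G-W$ had $n/100\le|U|\le n/2$, then every $G$-neighbour of $U$ outside $U$ lies in $W$, so by the expansion of $G$ we get $|W|\ge|N_G(U)|\ge\eps(|U|)\,|U|\ge\eps(n/100)\cdot n/100$. Since $1/d\ll\eps_1$ makes $15n/(100\eps_1 d)<n$, this forces $\eps(n/100)\cdot n/100>\eps_1 n/(100\log^2n)\ge|W|$, a contradiction. Hence every component of $G-W$ has size $<n/100$ or $>n/2$; there is a unique giant one $C$, and the same estimate shows the small components together have size less than $n/100$, so $|C|\ge|V(G)\setminus W|-n/100\ge(1-o(1))n$.

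Next, the engine of the argument. For $v\in C$ let $Z_i:=B^i_{G-W}(v)$; since $Z_i\subseteq C$ and $C$ is connected, $Z_i$ grows strictly until it exhausts $C$, and $|Z_{i+1}|\ge|Z_i|+\eps(|Z_i|)\,|Z_i|-|W|$ (from the expansion of $G$ when $|Z_i|\ge\eps_1 d/2$, and from $\delta(G)\ge d$ otherwise). The point is that $\eps(x)\,x$ is increasing and, once $x\ge n/100$, exceeds $|W|$ by an amount of order at least $\eps_1 n/\log^3n$ — a short computation using $|W|\le\eps_1 n/(100\log^2n)$ and $\log^2(15x/(\eps_1 d))<\log^2n$ for $x\le n$. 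Thus as soon as $|Z_i|$ reaches $n/100$, the ball grows by at least order $\eps_1 n/\log^3n$ at every step and surpasses $n/25$ after only $O(\eps_1^{-1}\log^3n)$ further steps. The whole problem is therefore reduced to finding a vertex $v\in C$ whose ball reaches size $n/100$ within $O(\eps_1^{-1}\log^3n)$ steps.

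This reduction is where the real difficulty lies. While $|Z_i|<n/100$ one may have $\eps(|Z_i|)\,|Z_i|<|W|$, so $W$ can in principle absorb essentially all of the ball's expansion at a step, and a badly chosen starting vertex can have its ball ``creep'', gaining only $O(1)$ vertices per step, for as long as $\Theta(n)$ steps. The resolution — this is the technical heart of the statement, carried out in \cite[Lemma~3.12]{EH-LM} — is that $W$ is simply too small to trap the balls of \emph{all} vertices of $C$: whenever the ball from $v$ fails to grow by the expected fraction $\tfrac12\eps(|Z_i|)\,|Z_i|$ at step $i$, more than $\tfrac12\eps(|Z_i|)\,|Z_i|$ vertices of $W$ must be adjacent to $Z_i$, and monitoring how the nested sets $N_G(Z_i)\cap W$ accumulate shows that a sub-region of $C$ of size $s$ that holds the ball back ``costs'' $W$ of order $\eps(s)\,s$ vertices; since all such regions have size below $n/100$, blocking the ball from every starting vertex of $C$ would force $|W|$ above $\eps_1 n/(100\log^2n)$ — which is exactly why the hypothesis on $|W|$ has that precise form rather than merely $|W|=o(n)$. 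With a good $v$ secured, I would set $B:=B^{R}_{G-W}(v)$ for the radius $R=O(\eps_1^{-1}\log^3n)\le 50\eps_1^{-1}\log^3n$ at which $|B|\ge n/25$; as observed at the outset, $\mathrm{diam}(G[B])\le 2R\le 100\eps_1^{-1}\log^3n$, completing the proof.
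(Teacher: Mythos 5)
First, a point of context: the paper does not prove this lemma at all --- it is imported verbatim from \cite[Lemma 3.12]{EH-LM} --- so the only meaningful benchmark is whether your sketch would stand on its own. The outer layers of your argument are sound: taking $B=B^{R}_{G-W}(v)$ does bound the diameter by $2R$; the analysis of the components of $G-W$ via the expansion of unions of components is correct and gives the giant component; and the endgame computation (once $|Z_i|\ge n/100$, the surplus $\eps(|Z_i|)|Z_i|-|W|$ is of order $\eps_1 n/\log^3 n$, so $O(\eps_1^{-1}\log^3 n)$ further steps reach $n/25$) is also correct.

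The problem is that the entire content of the lemma sits in the step you yourself label ``the technical heart'', and there you offer only a citation to \cite[Lemma~3.12]{EH-LM} --- that is, to the very statement being proved --- together with a charging heuristic that does not work as described. Two concrete issues. (i) For a single starting vertex the witness sets $N_G(Z_i)\cap W$ are nested increasing in $i$, so a ball that creeps up to size $s$ within radius $R$ only certifies $|W|\ge\tfrac12\eps(s)s$; for $s=o(n/\log^2 n)$ this is far below the budget $\eps_1 n/(100\log^2 n)$, so no single stuck ball produces a contradiction --- which you acknowledge. (ii) But when you then aggregate over starting vertices, the same vertices of $W$ can be adjacent to the stuck balls of many different, even pairwise disjoint, starting regions (note that $W$ may consist of vertices of arbitrarily high degree), so the ``costs'' $\eps(s)s$ of the blocking regions do not add up to a lower bound on $|W|$; the claim that ``blocking the ball from every starting vertex of $C$ would force $|W|$ above $\eps_1 n/(100\log^2n)$'' is exactly the assertion that needs proof, and the bookkeeping you describe overcounts. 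A correct argument requires a genuinely new idea here --- for instance extracting disjoint witness structures whose charges provably do not overlap, or exploiting the edge-robustness built into Definition~\ref{def-expander} --- and none of that is present. As written, the proposal is circular at its core.
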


\section{Proof of Theorem \ref{thm-main-result}}\label{sec-main-result}
A key structure we use in our proof is a kraken defined below.
The idea of the proof is depicted in Figure~\ref{fig:Kraken}. We embed the desired nested cycles by taking the cycle in a kraken to be the inner cycle and linking the arms of the kraken iteratively to get the outer cycle so that the cyclic orderings of the vertices of both cycles respect each other.


\begin{defin}\label{def-kraken}
	For $k,m,s\in\mathbb{N}$, a graph $K$ is a \emph{$(k,m,s)$-kraken} if it contains a cycle $C$ with vertices $v_1,\dots,v_k$ (in the order of the cycle $C$), vertices $u_{i,1},u_{i,2}\in V(G)\setminus V(C), i\in [k]$, and subgraphs $A=\bigcup_{i=1}^{k}\big(A_{i,1}\cup A_{i,2}\big)$ and $R=\bigcup_{i=1}^{k}\big(R_{i,1}\cup R_{i,2}\big)$, where
	\begin{itemize}\itemsep=0pt
		\item $\{A_{i,j}:i\in[k],\, j\in[2]\}$ is a collection of pairwise disjoint sets of size $s$ lying in $V(G)\setminus V(C)$ with $u_{i,j}\in A_{i,j}$, each with diameter at most $m$.
		\item $\{R_{i,j}:i\in[k],\, j\in[2]\}$ is a collection of pairwise internally vertex disjoint paths such that $R_{i,j}$ is a path between $v_i$ and $u_{i,j}$ of length at most $10m$ with internal vertices disjoint from $V(C)\cup(V(A)\setminus V(A_{i,j}))$. 
	\end{itemize}
\end{defin}

\begin{figure}[h]
	\centering
	\scalebox{0.8}{	
		\begin{tikzpicture}	
	\draw[black] (0,0) circle (1cm);
	\node[inner sep= 1pt](a) at (0,0){\small$C$};
	\node[inner sep= 1pt](a1) at (0.85,0.5)[circle,fill]{};
	\node[inner sep= 1pt](b) at (1,2){\small\textcolor{darkblue}{$R_{i,2}$}};
	\node[inner sep= 1pt](b) at (1.4,1.4){\small\textcolor{darkblue}{$R_{i+1,1}$}};
	\node[inner sep= 1pt](b) at (1.9,0.1){\small\textcolor{darkblue}{$R_{i+1,2}$}};
	\node[inner sep= 1pt](b) at (3,2.7){\small\textcolor{lightseagreen}{$A_{i+1,1}$}};
	\node[inner sep= 1pt](b) at (4,1.2){\small\textcolor{lightseagreen}{$A_{i+1,2}$}};
	\node[inner sep= 1pt](b) at (-0.9,2){\small\textcolor{darkblue}{$R_{i,1}$}};
	\node[inner sep= 1pt](b) at (0,0.7){$v_i$};
	\node[inner sep= 1pt](b) at (0.5,0.4){$v_{i+1}$};
	\node[inner sep= 1pt](b) at (2.1,3.5){\small\textcolor{lightseagreen}{$A_{i,2}$}};
	\node[inner sep= 1pt](b) at (-1.9,3.5){\small\textcolor{lightseagreen}{$A_{i,1}$}};
	\node[inner sep= 1pt](b) at (1,3.55){\tiny$u_{i,2}$};
	\node[inner sep= 1pt](a2) at (3.5,0.5)[circle,fill]{};
	\node[inner sep= 1pt](b) at (3,1.7){\tiny$u_{i+1,1}$};
	\node[inner sep= 1pt](b) at (3.5,0.3){\tiny$u_{i+1,2}$};
	\node[inner sep= 1pt](a2) at (3,2)[circle,fill]{};
	\node[inner sep= 1pt](a2) at (0,1)[circle,fill]{};
	\node[inner sep= 1pt](a3) at (-0.85,0.5)[circle,fill]{};
	\node[inner sep= 1pt](a4) at (-0.85,-0.5)[circle,fill]{};
	\node[inner sep= 1pt](a5) at (0,-1)[circle,fill]{};
	\node[inner sep= 1pt](a6) at (0.85,-0.5)[circle,fill]{};
	\draw[lightseagreen] (3.5,0.5) circle (0.5cm);
	\draw[lightseagreen] (3,2) circle (0.5cm);
	\draw[decorate, decoration=snake, segment length=5mm,darkblue] (a1) -- (3,0.4);
	\draw[decorate, decoration=snake, segment length=6mm,darkblue] (a1) -- (2.55,1.8);
	\draw[darkblue,dashed]  (3.5,0.5) -- (3,0.4);
	\draw[darkblue,dashed]  (3,2) -- (2.55,1.8);
	\draw[deepcarmine] (4,0.6) .. controls (5.5,0.2) and (5.3,-0.7) .. (4,-1);
	\draw[deepcarmine,dashed]  (3.5,0.5) -- (4,0.6);
	\draw[deepcarmine,dashed] (3.5,-1) -- (4,-1);
	\draw[lightseagreen] (1.2,3.4) circle (0.5cm);
	\node[inner sep= 1pt](aa) at (1.2,3.4)[circle,fill]{};
	\node[inner sep= 1pt](aaa) at (-0.7,3.5){\tiny$u_{i,1}$};
	\draw[lightseagreen] (-1,3.5) circle (0.5cm);
	\node[inner sep= 1pt](aa) at (-1,3.5)[circle,fill]{};
	\draw[decorate, decoration=snake, segment length=5mm,darkblue] (a2) -- (1,2.95);
	\draw[decorate, decoration=snake, segment length=5mm,darkblue] (a2) -- (-0.8,3.05);
	\draw[darkblue,dashed]  (1.2,3.4) -- (1,2.95);
	\draw[darkblue,dashed]  (-1,3.5) -- (-0.8,3.05);
	\draw[lightseagreen] (-3.5,0.6) circle (0.5cm);
	\draw[lightseagreen] (-3.1,2) circle (0.5cm);
	\draw[decorate, decoration=snake, segment length=5mm,darkblue] (a3) -- (-3,0.45);
	\draw[decorate, decoration=snake, segment length=4mm,darkblue] (a3) -- (-2.7,1.7);
	\draw[darkblue,dashed]  (-3.5,0.6) -- (-3,0.45);
	\draw[darkblue,dashed]  (-3.1,2) -- (-2.7,1.7);
	\draw[lightseagreen] (-3.5,-0.6) circle (0.5cm);
	\draw[lightseagreen] (-3.1,-2) circle (0.5cm);
	\draw[decorate, decoration=snake, segment length=5mm,darkblue] (a4) -- (-3,-0.45);
	\draw[decorate, decoration=snake, segment length=4mm,darkblue] (a4) -- (-2.7,-1.7);
	\draw[darkblue,dashed]  (-3.5,-0.6) -- (-3,-0.45);
	\draw[darkblue,dashed]  (-3.1,-2) -- (-2.7,-1.7);
	\draw[lightseagreen] (1.2,-3.4) circle (0.5cm);
	\draw[lightseagreen] (-1.3,-3.5) circle (0.5cm);
	\draw[decorate, decoration=snake, segment length=5mm,darkblue] (a5) -- (1,-2.95);
	\draw[decorate, decoration=snake, segment length=5mm,darkblue] (a5) -- (-1.1,-3.05);
	\draw[darkblue,dashed]  (1.2,-3.4) -- (1,-2.95);
	\draw[darkblue,dashed]  (-1.3,-3.5) -- (-1.1,-3.05);
	\draw[lightseagreen] (3.5,-1) circle (0.5cm);
	\draw[lightseagreen] (3.1,-2.5) circle (0.5cm);
	\draw[decorate, decoration=snake, segment length=5mm,darkblue] (a6) -- (3.03,-0.85);
	\draw[decorate, decoration=snake, segment length=4.5mm,darkblue] (a6) -- (2.7,-2.2);
	\draw[darkblue,dashed]  (3.5,-1) -- (3.03,-0.85);
	\draw[darkblue,dashed]  (3.1,-2.5) -- (2.7,-2.2);
	\draw[deepcarmine] (4,0.6) .. controls (5.5,0.2) and (5.3,-0.7) .. (4,-1);
	\draw[deepcarmine,dashed]  (3.5,0.5) -- (4,0.6);
	\draw[deepcarmine,dashed] (3.5,-1) -- (4,-1);
	\draw[deepcarmine] (3.5,-2.8) .. controls (3.8,-6) and (1.4,-3.8) .. (1.4,-3.85);
	\draw[deepcarmine,dashed]  (3.1,-2.5) -- (3.5,-2.8);
	\draw[deepcarmine,dashed] (1.2,-3.4) -- (1.4,-3.85);
	\draw[deepcarmine] (-1.5,-3.95) .. controls (-3,-6) and (-4,-3.8) .. (-3.4,-2.4);
	\draw[deepcarmine,dashed]  (-1.5,-3.95) -- (-1.3,-3.5);
	\draw[deepcarmine,dashed] (-3.4,-2.4) -- (-3.1,-2);
	\draw[deepcarmine] (-4,-0.7) .. controls (-6,-2) and (-6.5,1) .. (-4,0.75);
	\draw[deepcarmine,dashed]  (-4,0.75) -- (-3.5,0.6);
	\draw[deepcarmine,dashed] (-4,-0.7) -- (-3.5,-0.6);
	\draw[deepcarmine] (-3.6,2.1) .. controls (-5.5,3.8) and (-2.5,4.6) .. (-1.3,3.9);
	\draw[deepcarmine,dashed]  (-3.6,2.1) -- (-3.1,2);
	\draw[deepcarmine,dashed] (-1.3,3.9) -- (-1,3.5);
	\draw[deepcarmine] (1.45,3.85) .. controls (4.5,5.4) and (4,2.5) .. (3.45,2.2);
	\draw[deepcarmine,dashed]  (1.2,3.4) -- (1.45,3.85);
	\draw[deepcarmine,dashed] (3.45,2.2) -- (3,2);
	\end{tikzpicture}
}
		\caption{A $(6,m,s)$-kraken with an inner cycle $C$ of length $6$. Each vertex $v_i$ in $C$ has two arms attached to it. The arms of the kraken and the (red) paths linking them form the outer cycle.\label{fig:Kraken}}
\end{figure}

We usually write a kraken as a tuple $(C, A_{i,j}, R_{i,j}, u_{i,j})$, $i\in[k], j\in[2]$. The following lemma finds a kraken in any expander with average degree at least some large constant.

\begin{lemma}\label{lem-main}
Let $0<1/d\ll \eps_1<1$ and let $G$ be an $n$-vertex $(\eps_1,\eps_1d)$-expander with $\delta(G)\geq d$. Let $L$ be the set of vertices in $G$ with degree at least $\log^{100}n$ and let $m=100\eps_1^{-1}\log^3n$. Then, there exists a $(k,m,\log^{10}n)$-kraken $(C, A_{i,j}, R_{i,j}, u_{i,j})$, $i\in[k], j\in[2]$, in $G$ for some $k\le \log n$ such that
\begin{itemize}
	\item either $\{u_{i,j}:\.i\in[k], j\in[2]\}\subseteq L$;
	
	\item or $|L|\le 2\log n$ and any distinct $u_{i,j}, u_{i',j'}\not\in L$ are a distance at least $\sqrt{\log n}$ apart in $G-L$.
\end{itemize}
\end{lemma}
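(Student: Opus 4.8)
The plan is to build the kraken from the inside out, starting with a short cycle $C$ and then, for each vertex $v_i\in V(C)$, growing two ball-like clusters $A_{i,1},A_{i,2}$ together with short connecting paths $R_{i,1},R_{i,2}$, while carefully reserving space so that all these pieces stay pairwise disjoint. First I would set up parameters: let $m=100\eps_1^{-1}\log^3 n$, $s=\log^{10}n$, and note that the total number of vertices we ever need to touch is at most (number of arms)$\times$(path length $+$ cluster size), which is $O(\log n\cdot(m+s))=O(\log^{13}n)=o(n/\log^2 n)$; so at every stage the set of already-used vertices is negligible and Lemma~\ref{lem-find-large-ball}, Lemma~\ref{lem-short-diam} and Proposition~\ref{prop-exp-HL} are all applicable with the used set playing the role of the forbidden set $W$ or $Y$.

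Next I would find the cycle $C$ of length $k\le\log n$. A standard way in sublinear expanders: pick any vertex, grow a ball of radius $O(\log n)$ using $\delta(G)\ge d$ and the expansion (Proposition~\ref{prop-exp-HL} guarantees it becomes huge), and since $G$ has minimum degree at least $d\ge 3$ the resulting BFS-tree must close up into a cycle; alternatively just use that an expander with min degree $\ge 3$ has a cycle through any short BFS structure, yielding a cycle $C$ of length at most $2m\le\log n$ (shrinking constants). Label its vertices $v_1,\dots,v_k$ in cyclic order. Then, processing the $2k$ arms one at a time, for the $(i,j)$-th arm I would: (a) apply Lemma~\ref{lem-find-large-ball} inside $G$ minus the currently used vertices to obtain a set $B$ of size $\ge n/25$ with diameter $\le m$; (b) apply Lemma~\ref{lem-short-diam} (robust version, deleting the used vertices) to connect $\{v_i\}$ — or rather a small ball around $v_i$ obtained from Proposition~\ref{prop-exp-HL} to guarantee enough vertices to link — to $B$ by a path of length $\le 10m$, whose interior avoids all used vertices; call its endpoint in $B$ the vertex $u_{i,j}$, and set $A_{i,j}$ to be any $s$ vertices of $B$ within distance $m$ of $u_{i,j}$ (so $u_{i,j}\in A_{i,j}$ and $\operatorname{diam}(A_{i,j})\le m$). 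Add $V(R_{i,j})\cup V(A_{i,j})$ to the used set and move on. Disjointness of the $A_{i,j}$ and internal-disjointness of the $R_{i,j}$, as well as $R_{i,j}$ avoiding $V(C)$ and the other clusters, are automatic because each step explicitly forbids everything used before.

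The remaining issue is the dichotomy between the two bullet points, and this is where I would spend the real effort. If $|L|>2\log n$, then since $|L|\ge 2\log n\ge 2k$, when choosing $u_{i,j}$ I would insist that $B$ (hence $u_{i,j}$) be selected from $L$: this is possible because $L$ is large — strictly, I would run Lemma~\ref{lem-find-large-ball} to get the big set $B$, observe $B$ must contain many vertices of $L$ unless $|V(G)\setminus L|$ is almost everything, and then route the linking path to a vertex of $B\cap L$; one has to check $|B\cap L|$ stays positive after removing the $o(n)$ used vertices, which holds as long as $|L|$ is not tiny, and one can even arrange $|L|\gg\log^{13}n$ by a preliminary case split (if $|L|\le 2\log n$ we are in the second bullet anyway). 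In the complementary case $|L|\le 2\log n$, every vertex outside $L$ has degree $<\log^{100}n$, so $G-L$ has maximum degree $<\log^{100}n$, and hence a ball of radius $\sqrt{\log n}$ in $G-L$ around any vertex has size at most $(\log^{100}n)^{\sqrt{\log n}}=\exp(100(\log\log n)\sqrt{\log n})=n^{o(1)}$; thus when I pick $u_{i,j}\notin L$ I can additionally forbid the (sub-polynomial sized) balls $B^{\sqrt{\log n}}_{G-L}(u_{i',j'})$ around all previously chosen arm-endpoints, guaranteeing pairwise distance $\ge\sqrt{\log n}$ in $G-L$; the forbidden set remains $n^{o(1)}\cdot\log n=o(n/\log^2 n)$, so Lemma~\ref{lem-find-large-ball} still applies. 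The main obstacle, then, is not any single estimate but bookkeeping: making the order of operations precise so that in every case the ``used/forbidden'' set is provably $o(n/\log^2 n)$ and the endpoints land in the right place ($L$, or far apart in $G-L$); once that is organised, each individual step is a direct invocation of the three lemmas from Section~\ref{subsec-exp-growth-sets}.
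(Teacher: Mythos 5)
There is a genuine gap, and it is exactly the difficulty the paper flags at the start of Section~\ref{sec:kraken}: you must attach \emph{two} internally disjoint paths to each cycle vertex $v_i$, but $v_i$ may have degree only $d=O(1)$. Your strategy is to grow each arm greedily while forbidding ``all used vertices so far'', invoking Proposition~\ref{prop-exp-HL} to blow $v_i$ up into a set large enough for Lemma~\ref{lem-short-diam}. But Proposition~\ref{prop-exp-HL} does not apply to an arbitrary forbidden set: it requires $W$ to be $(\lambda,k)$-thin around $v_i$, i.e.\ to meet the $i$-th sphere around $v_i$ in only $O(i^k)$ vertices. A generic used set of size $\Theta(\log^{13}n)$ has no reason to be thin around $v_i$ --- in the worst case the previously built paths occupy all $d$ neighbours of $v_i$ and $v_i$ is isolated in the remaining graph. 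The robust form of Lemma~\ref{lem-short-diam} only tolerates deleting $\eps(x)x/4$ vertices, which for $x$ of order $d$ is far below $\log^{13}n$. The paper resolves this by (a) taking $C$ to be a \emph{shortest} cycle, so $C$ itself meets each sphere around $v$ in at most $2i$ vertices; (b) taking the path system $\cP$ to be maximal and, subject to that, of minimum total length, which forces each path to meet the $i$-th sphere in at most $i$ vertices and forces paths rooted far away on $C$ to avoid the ball entirely (Claim~\ref{claim:U-thin}); and then running a maximality-contradiction rather than a greedy construction. Without some such extremal choice, your order-of-operations bookkeeping cannot be made to work; this is a missing idea, not a missing detail.

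A second, more local problem is your handling of the first alternative. When $2\log n<|L|\le \log^{13}n$, the lemma still requires all $u_{i,j}$ to lie in $L$, but a set $B$ of size $n/25$ produced by Lemma~\ref{lem-find-large-ball} need not contain a single vertex of $L$, so ``route the linking path to a vertex of $B\cap L$'' can fail; your suggested preliminary case split does not rescue this, since the dichotomy in the statement is at $|L|\approx 2\log n$, not at $|L|\approx n^{\Omega(1)}$. The correct move (as in Lemma~\ref{lem:link-L}) is to link the expanded ball around $v_i$ to $N_G(w)$ for an unused $w\in L$, which has size at least $\log^{100}n$ and is therefore large enough for Lemma~\ref{lem-short-diam} regardless of $|L|$. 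Your treatment of the second alternative (forbidding balls of radius $\sqrt{\log n}$ in $G-L$, which have size $n^{o(1)}$ by the maximum-degree bound) is essentially the right idea and matches the paper's Claim~\ref{cl:far-apart-balls}, but it again relies on the unavailable greedy linking step.
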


\begin{proof}[Proof of Theorem~\ref{thm-main-result}]
	Let $L$ be the set of high degree vertices and $K=(C, A_{i,j}, R_{i,j}, u_{i,j})$, $i\in[k],j\in [2]$ be the kraken as in Lemma \ref{lem-main}. We will embed, for each $i\in \bZ_k$, a path $P_i$ between $u_{i,2}$ and $u_{i+1,1}$  of length at most $30m$, such that all paths $P_i$ are internally pairwise disjoint and also disjoint from $C$ and $R_{i,j}$, $i\in[k], j\in[2]$. Here, indices are taken modulo $k$.
	Such paths $P_i$, $i\in[k]$, together with $C$ and $R_{i,j}$, $i\in[k], j\in[2]$, form the desired nested cycles without crossings. 
	
	If the first alternative in Lemma~\ref{lem-main} occurs, i.e. all $u_{i,j}\in L$, $i\in[k], j\in[2]$, then we can iteratively find the desired paths $P_i$, $i\in[k]$, by linking $N(u_{i,2})$ and $N(u_{i+1,1})$ avoiding previously built paths and the kraken $K$, using Lemma~\ref{lem-short-diam}. Indeed, the number of vertices to avoid is at most $|V(K)|+k\cdot 30m\le \log^{20}n$, which is much smaller than the degree of vertices in $L$.

	We may then assume that $|L|\leq 2\log n$ and distinct $u_{i,j}, u_{i',j'}\not\in L$ are at distance at least $\sqrt{\log n}$ apart in $G'=G-L$. Let $V'\subseteq V(C)$ be the set of vertices not linked to vertices in $L$, i.e.~$V'=\{v_i\in V(C):~\{u_{i,1}, u_{i,2}\}\not\subseteq L\}$.
	
	For each $v_i\in V'$ and $j\in[2]$ with $u_{i,j}\not\in L$, write $Y_{i,j}=(\cup_{i'\in[k], j'\in[2]}R_{i',j'}\setminus\{u_{i,j}\}) \cup V(C)$. Note that $|Y_{i,j}|\le \log^5n$. Recall that $|A_{i,j}|=\log^{10}n$. Applying Proposition~\ref{prop-exp-HL} with $(X,Y,W)_{\ref{prop-exp-HL}}=(A_{i,j}, Y_{i,j}\cup L ,\varnothing)$, we can expand $A_{i,j}$ in $G'$ avoiding $Y_{i,j}$ to get $A_{i,j}^*:=B_{G'-Y_{i,j}}^r(A_{i,j})$ of size at least $\log^{30}n$, where $r=(\log\log n)^{10}$. Moreover, note that for distinct $v_i,v_{i'}\in V'$ and $j,j'\in[2]$ with $u_{i,j},u_{i',j'}\not\in L$, $u_{i,j}$ and $u_{i',j'}$ are at distance at least $\sqrt{\log n}$ apart in $G'$, therefore the corresponding $A_{i,j}^*$ and $A_{i',j'}^*$ are disjoint. 
	
	Finally, for $v_i\in V'$ and $j\in[2]$ with $u_{i,j}\in L$ and all $v_i\in V(C)\setminus V'$ and $j\in[2]$ whose corresponding $u_{i,j}$ lie in $L$, we can choose pairwise disjoint $A_{i,j}^*\subseteq N(u_{i,j})\setminus (\cup_{v_{i'}\in V', j'\in[2]}A^*_{i',j'})$, each of size $\log^{30}n$. For each $i\in\bZ_k$,  link $A_{i,2}^*$ and $A_{i+1,1}^*$ in $G$ to get a path $Q_i$ with length at most $m$ using Lemma~\ref{lem-short-diam}, avoiding previously built path $Q_j$ and $K$ (indices are taken modulo $k$). The desired path $P_i$ between $u_{i,2}$ and $u_{i+1,1}$ can be obtained by extending $Q_i$ in $A_{i,2}^*\cup A_{i+1,1}^*$.
	
	This concludes the proof.
\end{proof}

\section{Release the Kraken!}\label{sec:kraken}
In this section, we prove Lemma~\ref{lem-main}. The idea of the proof is the following. We take a shortest cycle $C$ in the graph, and consider two cases depending on whether the set $L$ of high degree vertices is large or not. For the case where this set $L$ is large, we want to link each vertex on the cycle $C$ to two different vertices in $L$ by expanding vertices in $C$ using Proposition \ref{prop-exp-HL}. If there is a small number of high degree vertices, we will use the fact that the graph $G-L$ has relatively small maximum degree, and so we can find within $G-L$ many large connected set of vertices that are pairwise far apart. Then we will expand vertices in $C$ to link them to these connected sets. 

One difficulty here is that the number of paths we need to build to link either high degree vertices or large connected sets to vertices in $C$ could be as large as $\Omega(\log n)$; while the degree of each vertex in $C$ could be as small as $O(1)$. We have to be careful when embedding these paths so that no vertex in $C$ gets isolated.

Let us first see how we can link vertices on a shortest cycle to high degree vertices in $L$ as follows.
\begin{lemma}\label{lem:link-L}
	Let $0<1/d\ll \eps_1<1$ and let $G$ be an $n$-vertex $(\eps_1,\eps_1d)$-expander with $\delta(G)\geq d$. Let $L$ be the set of vertices in $G$ with degree at least $\log^{100}n$ and let $m=100\eps_1^{-1}\log^3n$. Let $C$ be a shortest cycle in $G$ with vertices $v_1,\ldots, v_k$ and let $\mathcal{P}$ be a maximal collection of paths in $G$ from $V(C)$ to $L$ such that:
	\begin{itemize}\itemsep=0pt
		\item each $v\in V(C)$ is linked to at most $2$ vertices in $L$;
		\item all paths are pairwise disjoint outside of $V(C)$ with internal vertices in $V(G)\setminus(V(C)\cup L)$;
		\item each path has length at most $10m$.
	\end{itemize}
	Subject to $|\mathcal{P}|$ being maximal, let $\ell(\cP):=\sum_{P\in\mathcal{P}}\ell(P)$ be minimised. 
	
	Let $U=V(C)\cup V(\mathcal{P})$. Then, 
	\begin{itemize}
		\item[(i)] for any vertex $v\in C$ that is in less than 2 paths in $\cP$, the set $U\setminus\{v\}$ is $(10,2)$-thin around $v$ in $G$; and
		
		\item[(ii)] at least $\min\{|L|, 2k\}$ many vertices in $L$ are linked to paths in $\cP$.
	\end{itemize}
\end{lemma}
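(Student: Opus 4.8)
The plan is to exploit the maximality of $\mathcal{P}$ together with the secondary minimality of $\ell(\mathcal{P})$, and the shortest-cycle property of $C$. For part (ii), suppose fewer than $\min\{|L|,2k\}$ vertices of $L$ are used by $\mathcal{P}$. Then there is a vertex $w\in L$ not yet reached, and since $|L|>2k$ is impossible in that sub-case (it forces $2k$ used vertices), we have $|L|<2k$; in either case a counting argument shows some $v_i\in V(C)$ is in fewer than $2$ paths of $\mathcal{P}$. I would then try to link $v_i$ to $w$ (or to any unused vertex of $L$) avoiding $U\setminus\{v_i\}$, contradicting maximality of $|\mathcal{P}|$. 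To do this I expand a neighbourhood of $v_i$ inside $G-L$ past $U\setminus\{v_i\}$: part (i) says that $U\setminus\{v_i\}$ is $(10,2)$-thin around $v_i$ in $G$, so Proposition~\ref{prop-exp-HL} (with $X=\{v_i\}$, $Y=\varnothing$, $W=U\setminus\{v_i\}$, noting $|U\setminus\{v_i\}|\le\log^5 n$ is tiny) grows a ball $B$ of size $\exp(r^{1/4})\ge\log^{10}n$ in $G-W$ within radius $r=\log^4 n$ steps, hence within distance $\le m/2$; similarly I grow a ball of the same size around $w$ (using $\deg(w)\ge\log^{100}n$, so $N(w)$ already contains $\log^{10}n$ vertices avoiding $U$), then connect the two balls by a short path via Lemma~\ref{lem-short-diam} inside $G$ minus the bounded set $U$, giving a $v_i$–$w$ path of length $\le 10m$ internally disjoint from $U\cup L$. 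This is the contradiction.

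For part (i), fix $v=v_i\in V(C)$ lying in fewer than $2$ paths of $\mathcal{P}$; I must bound $|N_G(B^{j-1}_{G-(U\setminus\{v\})}(v))\cap(U\setminus\{v\})|\le 10 j^2$ for every $j$. Write $W=U\setminus\{v\}$ and $Z_{j-1}=B^{j-1}_{G-W}(v)$. A vertex of $W$ contributing to $N_G(Z_{j-1})\cap W$ is either (a) on the cycle $C$, or (b) an internal vertex of some path $P\in\mathcal{P}$. For type (a): any $v_{i'}\in V(C)\cap N_G(Z_{j-1})$ is reached by a $v$–$v_{i'}$ walk of length $\le j$ that stays in $\{v\}\cup Z_{j-1}$ avoiding the rest of $C$; but $C$ is a shortest cycle, and I would argue (as in the standard shortest-cycle bookkeeping) that there are only $O(j)$ vertices of $C$ within such controlled distance $j$ of $v$ — more precisely, if two cycle vertices $v_{i'},v_{i''}$ at cycle-distance $>2j$ from each other along $C$ were both within $G-W$-distance $\le j$ of $v$, one could splice a shorter cycle, so the type-(a) count is at most, say, $4j+1$. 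For type (b): the crux is that two distinct paths $P,P'\in\mathcal{P}$ cannot both send an internal vertex close to $v$ along $G-W$, else one reroutes $P$ (or $P'$) through $v$ to shorten $\ell(\mathcal{P})$ or to gain an extra path — here the minimality of $\ell(\mathcal{P})$ and the "at most $2$ per vertex" constraint kick in, yielding $O(j)$ internal-path vertices as well. Adding the two contributions gives $\le 10 j^2$ (in fact $O(j)$, but the weaker quadratic bound suffices), which is exactly $(10,2)$-thinness.

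\textbf{Main obstacle.} The genuinely delicate part is part (i), specifically the type-(b) bound: translating "$\mathcal{P}$ is maximal, then $\ell(\mathcal{P})$ is minimal" into a quantitative statement that no $j$-ball around $v$ meets too many path-interiors. One has to set up the rerouting carefully — if an internal vertex $p\in V(P)$ is within distance $t\le j$ of $v$ in $G-W$ via a path $Q$ internally disjoint from $U$, then replacing the segment of $P$ from $v_i$ onward by $Q$ plus the far part of $P$ either creates a new admissible path (if $v$ was in $<2$ paths and the endpoint in $L$ is fresh), contradicting maximality, or shortens the total length, contradicting minimality of $\ell(\mathcal P)$ — but one must check the length stays $\le 10m$ and the disjointness constraints survive, and handle the case where several path-interiors cluster near $v$ so that only the "first" few can be charged. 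I expect the cycle-vertex (type (a)) bound to be routine shortest-cycle geometry, and part (ii) to follow quickly once part (i) and Proposition~\ref{prop-exp-HL}/Lemma~\ref{lem-short-diam} are in hand; it is the interplay of the two extremal choices defining $\mathcal{P}$ that carries the weight.
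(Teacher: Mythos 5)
Your skeleton is the same as the paper's, and your part (ii) is essentially its argument: locate a cycle vertex $v$ lying in fewer than two paths and an unreached $w\in L$, expand $v$ past the $(10,2)$-thin set $U\setminus\{v\}$ using Proposition~\ref{prop-exp-HL}, and join the resulting ball to $N_G(w)$ by Lemma~\ref{lem-short-diam}. One parameter needs fixing: Proposition~\ref{prop-exp-HL} only permits $r\le \log n$, and a radius of $\log^4 n$ would in any case push the final path length past $10m$; the correct choice is $r=(\log\log n)^{10}$, which already gives a ball of size $\exp(r^{1/4})\ge \log^{100}n$, ample for Lemma~\ref{lem-short-diam} after deleting the $O(\eps_1^{-1}\log^4 n)$ vertices of $U$.

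The genuine gap is in part (i), exactly where you place it, and the heuristic you offer for closing it is false: it is not true that two distinct paths of $\cP$ cannot both send internal vertices close to $v$, nor is the path-interior contribution $O(j)$. All of the paths anchored at cycle vertices within cycle-distance $4j$ of $v$ may legitimately enter the $j$-th sphere around $v$, which is precisely why the thinness bound is quadratic. The missing argument is a product of two linear bounds. First, a per-path bound: for each fixed $P\in\cP$, the sphere $N_G\bigl(B^{j-1}_{G-(U\setminus\{v\})}(v)\bigr)$ meets $V(P)$ in at most $j$ vertices, since otherwise routing from $v$ through the ball to the hit vertex of $P$ nearest its $L$-end replaces an initial segment of $P$ of length greater than $j$ by a path of length at most $j$, preserving $|\cP|$ and all the constraints (here one uses that $v$ is in fewer than two paths and that the ball avoids $U\setminus\{v\}$) while strictly decreasing $\ell(\cP)$. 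Second, paths anchored far from $v$ on $C$ contribute nothing: if $\widehat P\in\cP$ has its $C$-endvertex $\hat v\notin B_C^{4j}(v)$, split $\widehat P$ into its $2j$ vertices nearest $C$ and the remainder; a hit in the near part yields a $v$--$\hat v$ path of length at most $3j$ internally avoiding $C$, which splices into a cycle shorter than the arc of length at least $4j$ it replaces (contradicting that $C$ is shortest), while a hit in the far part yields a strictly shorter $V(C)$--$L$ path (contradicting minimality of $\ell(\cP)$). Hence only the $O(j)$ paths anchored in $B_C^{4j}(v)$ contribute, each at most $j$, giving the $O(j^2)$ path-interior count; added to the at most $2j$ cycle vertices (your type (a), which is correct), this is the $(10,2)$-thinness. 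Without the second step your charging scheme cannot limit the number of contributing paths, and without the first it cannot limit each path's contribution; neither follows from the rerouting observation as you state it.
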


\begin{proof}
	First of all, $|C|=k\leq 2\log_d n\leq\log n$ due to $\delta(G)\ge d$. Consequently, as there are at most two paths containing each vertex in $C$, we have that $|\mathcal{P}|\le 2|C|\le 2\log n$. So
	\begin{equation}\label{eq-size-U-new}
		|U|\leq \log n+2\log n\cdot 10m\leq 4000\eps_1^{-1}\cdot\log^4 n.
	\end{equation}
	
	Notice that as $C$ is a shortest cycle in $G$, for any $i\in\mathbb{N}$, we have
	\begin{equation}\label{eq-lim-contact-with-C}
		|N_G(B_{G-V(C)}^{i-1}(v))\cap V(C)|\leq 2i,
	\end{equation}
	for otherwise, replacing the segment of $C$ intersecting $N_G(B_{G-V(C)}^{i-1}(v))$ with a path of length at most $2i$ in $B_{G-V(C)}^{i-1}(v)\cup N_G(B_{G-V(C)}^{i-1}(v))$ results in a shorter cycle than $C$, contradicting the minimality of $C$.
	
	Let us first prove (i). Suppose $v\in C$ is in less than 2 paths in $\cP$.

	\begin{claim}\label{claim:U-thin}
		$U\setminus\{v\}$ is $(10,2)$-thin around $v$ in $G$.
	\end{claim}	
	\begin{poc}
		Fix first an arbitrary path $P$ in $\mathcal{P}$. Similar to~\eqref{eq-lim-contact-with-C}, by the minimality of $\ell(\cP)$, we have for any $i\in\mathbb{N}$ that
		\begin{equation}\label{eq-lim-contact-with-P}
			|N_G(B_{G-U\setminus\{v\}}^{i-1}(v))\cap V(P)|\leq i.
		\end{equation}
		
		Next, fix $i\in\bN$ and consider the set $B_C^{4i}(v)$. Let $\widehat{P}$ be a path in $\mathcal{P}$ whose endvertex $\hat{v}$ in $C$ is not in $B_C^{4i}(v)$ (if such a path exists). Notice that
		\begin{equation}\label{eq:far-path}
			N_G(B_{G-U\setminus\{v\}}^{i-1}(v))\cap \widehat{P}=\varnothing.
		\end{equation} 
		To see this, split $\widehat{P}$ into two parts $\widehat{P}^0$ and $\widehat{P}^1$, with $\widehat{P}^0$ consisting of the nearest $2i$ vertices to $C$ in $\widehat{P}$, and $\widehat{P}^1$ being the remaining ones. If there is a vertex $y\in N_G(B_{G-U\setminus\{v\}}^{i-1}(v))\cap \widehat{P}^0$, let $\widehat{Q}$ be the path between $v$ and $y$ of length $i$ with internal vertices in $B_{G-U\setminus\{v\}}^{i-1}(v)$. Then $\widehat{Q}\cup \widehat{P}^0$ contains a path $Q$ between $v$ and $\hat{v}$ of length at most $3i$ with $V(Q)\cap V(C)=\{v,\hat{v}\}$. Replacing the segment between $v$ and $\hat{v}$ in $C$ (which is of length at least $4i$) with $Q$ results in a shorter cycle than $C$, a contradiction. So, we must have $N_G(B_{G-U\setminus\{v\}}^{i-1}(v))\cap \widehat{P}^0=\varnothing$. If there is $y'\in N_G(B_{G-U\setminus\{v\}}^{i-1}(v))\cap \widehat{P}^1$, then $y'$ is at distance at most $i$ from $v$ in $G-U\setminus\{v\}$ and at distance at least $2i$ from $\hat{v}$ on $\widehat{P}$, so $B_{G-U\setminus\{v\}}^{i-1}(v)\cup \widehat{P}^1$ contains a shorter path between $V(C)$ and $L$, contradicting the minimality of $\ell(\cP)$. Hence, $N_G(B_{G-U\setminus\{v\}}^{i-1}(v))\cap \widehat{P}^1=\varnothing $, as desired.
		
		Let $\mathcal{P}_{4i}$ be the set of paths in $\mathcal{P}$ whose endvertices in $C$ are in $B_C^{4i}(v)$. By~\eqref{eq-lim-contact-with-P} and~\eqref{eq:far-path}, we see that
		\begin{equation*}
			\big|\big(N_G(B_{G-U\setminus\{v\}}^{i-1}(v))\big)\cap V(\mathcal{P})\big|=\Big|\bigcup_{P\in\mathcal{P}_{4i}}\Big(N_G(B_{G-U\setminus\{v\}}^{i-1}(v))\cap V(P)\Big)\Big|\leq 2\cdot 4i\cdot i= 8 i^2.
		\end{equation*}
		This, together with~\eqref{eq-lim-contact-with-C}, implies that
		\begin{equation*}
			|N_G(B_{G-U\setminus\{v\}}^{i-1}(v))\cap (U\setminus\{v\})\}|\leq |N_G(B_{G-U\setminus\{v\}}^{i-1}(v))\cap V(C)|+|N_G(B_{G-U\setminus\{v\}}^{i-1}(v))\cap V(\mathcal{P})|\leq 10 i^2.
		\end{equation*}
		Therefore, $U\setminus\{v\}$ is $(10,2)$-thin around $v$ as claimed. 
	\end{poc}

	Let us now turn to part (ii). Suppose for a contradiction that less than $\min\{|L|, 2k\}$ many vertices in $L$ are linked to paths in $\cP$. Then either $|\cP|<|L|<2k$ or $|L|\ge 2k$ and $|\cP|\le 2k-1$. In both cases, there are a vertex $v\in V(C)$ which is in less than $2$ paths in $\mathcal{P}$ and a high degree vertex $w\in L\setminus V(\mathcal{P})$ not linked to any path in $\cP$. Then, by Claim~\ref{claim:U-thin}, $U\setminus\{v\}$ is $(10,2)$-thin around $v$ in $G$.

	Setting $r=(\log\log n)^{10}$ and applying Proposition \ref{prop-exp-HL} with $(X,Y,W)_{\ref{prop-exp-HL}}=(\{v\},\varnothing, U\setminus\{v\})$ gives
	\begin{equation}\label{eq-use-lem-exp-case1}
		|B^r_{G-U\setminus\{v\}}(v)|\geq \exp(r^{\frac{1}{4}})=\log^{100}n.
	\end{equation}
	Recall that we have a vertex $w\in L\setminus V(\mathcal{P})$ not in any paths in $\cP$ with degree $\deg(w)\geq \log^{100}n$. Thus, by Lemma~\ref{lem-short-diam},~\eqref{eq-size-U-new} and~\eqref{eq-use-lem-exp-case1}, we can connect~$B^r_{G-U\setminus\{v\}}(v)$ and $N_G(w)$ with a path of length at most $m$ in $G-U\setminus\{v\}$, which extends in $B^{r}_{G-U\setminus\{v\}}(v)\cup\{w\}$ to a path between $v$ and $w$ in $G-U\setminus\{v\}$ with length at most $10m$, contradicting the maximality of $\mathcal{P}$. Thus, at least $\min\{|L|, 2k\}$ many vertices in $L$ are linked to paths in $\cP$ as desired.
\end{proof}

We are now ready to prove Lemma~\ref{lem-main}.

\begin{proof}[Proof of Lemma \ref{lem-main}]
	Let $C$ and $\cP$ be as in Lemma~\ref{lem:link-L}. We distinguish two cases depending on how many high degree vertices there are in $L$.
	
	\medskip
	
	\noindent\textsf{Case 1:} Suppose $|L|\geq 2k$. Then by Lemma~\ref{lem:link-L}~(ii), at least $2k$ vertices in $L$ are linked in paths in $\cP$. By the choice of $\cP$, we see that each vertex in $V(C)$ is in exactly $2$ paths in $\mathcal{P}$. Label the paths $R_{i,j}$ so that $v_i$ is the endvertex of $R_{i,j}$ in $C$ for $i\in[k], j\in[2]$, and let $u_{i,j}$ be the endvertex of $R_{i,j}$ in $L$. As vertices in $L$ have high degree, we can comfortably choose pairwise disjoint $A_{i,j}$ from $N(u_{i,j})$, each of size $\log^{10}n$, yielding the first alternative.

\medskip

\noindent\textsf{Case 2:} Suppose $|L|<2k$. Then by Lemma~\ref{lem:link-L}~(ii) again, we see that every vertex in $L$ is linked to a path in $\cP$, i.e.~$L\subseteq V(\cP)$. Note that $|V(\cP)|\le 2k\cdot 10m\le \log^5n$.
Relabelling if necessary, let $k'\leq k$ be such that $v_1,\dots,v_{k'}$ are the vertices in $C$ that are not linked in $\cP$ to two vertices in $L$. Let 
$$V'=\{v_1,\dots,v_{k'}\} \quad \text{ and } G':= G-(V(\cP)\setminus V')\subseteq  G-L.$$ Then, by the definition of $L$, $\Delta(G')\leq\log^{100}n$. 

\begin{claim}\label{cl:far-apart-balls}
	There are sets $B_i\subseteq V(G')$, $i\in[n^{1/8}]$, each of diameter at most $m$ and size $n^{1/8}$, that are at distance at least $4\sqrt{\log n}$ from each other and from $V(C)$ in $G'$.
\end{claim}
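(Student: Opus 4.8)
The plan is to build the sets $B_i$ greedily using Lemma~\ref{lem-find-large-ball}, exploiting the fact that $\Delta(G')\le\log^{100}n$ so that ``forbidden zones'' around already-chosen sets are only polylogarithmically thickened. First I would observe that $G'$ still has large minimum degree after the (very small) deletions: since $|V(\cP)\setminus V'|\le |V(\cP)|\le\log^5 n$ and $\delta(G)\ge d$, trimming iteratively any vertex whose degree drops below $d/2$ removes at most $2\log^5 n$ further vertices, leaving an induced subgraph $G''\subseteq G'$ with $\delta(G'')\ge d/2$ on at least $n/2$ vertices. By Theorem~\ref{thm-pass-to-expander} applied inside $G''$ (or by checking that $G''$ inherits expansion since we deleted only $o(n/\log^2 n)$ vertices), $G''$ is still an $(\eps_1',\eps_1' d)$-expander for a slightly smaller $\eps_1'$; for the rest I will just call it an expander with the stated parameters and $\delta\ge d$ and $|G''|\ge n/2$. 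Note all of $V(C)$ survives in $G''$ since $V(C)\cap(V(\cP)\setminus V')=\varnothing$.

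Next I would run the following greedy procedure to produce $B_1,\dots,B_{n^{1/8}}$. Suppose $B_1,\dots,B_{t-1}$ have been constructed for some $t\le n^{1/8}$, each of size $n^{1/8}$ and diameter at most $m$. Let $Z_t:=V(C)\cup\bigcup_{s<t}B_s$ and let $W_t:=N^{\le 4\sqrt{\log n}}_{G''}(Z_t)$ be the ball of radius $4\sqrt{\log n}$ around $Z_t$ in $G''$. Since $|V(C)|\le\log n$ and each $B_s$ has size $n^{1/8}$, we have $|Z_t|\le t\cdot n^{1/8}+\log n\le n^{1/4}+\log n$, and because $\Delta(G'')\le\Delta(G')\le\log^{100}n$, each such ball is blown up by at most a factor $(\log^{100}n)^{4\sqrt{\log n}}=\exp(O(\sqrt{\log n}\log\log n))=n^{o(1)}$, so $|W_t|\le n^{1/4}\cdot n^{o(1)}\le n^{1/3}$, say, which is certainly at most $\eps_1 |G''|/(100\log^2|G''|)$ for $n$ large. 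Apply Lemma~\ref{lem-find-large-ball} inside $G''$ with $W:=W_t$ to obtain a set $B'\subseteq G''-W_t$ with $|B'|\ge |G''|/25\ge n/50$ and diameter at most $100\eps_1^{-1}\log^3|G''|\le m$. Take $B_t$ to be any $n^{1/8}$ vertices of $B'$. Since $B_t$ is disjoint from $W_t$, every vertex of $B_t$ is at distance strictly greater than $4\sqrt{\log n}$ in $G''$, hence also in $G'$, from $Z_t=V(C)\cup\bigcup_{s<t}B_s$; as this holds for all $t$, the family $\{B_i\}$ is pairwise at distance $>4\sqrt{\log n}$ and each $B_i$ is at distance $>4\sqrt{\log n}$ from $V(C)$, all measured in $G''\subseteq G'$. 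This proves the claim.

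The parameter check at each step is the only thing that needs care, and it is where I expect the main (mild) obstacle to be: one must confirm simultaneously that (a) the expansion parameters of $G''$ are still strong enough to invoke Lemma~\ref{lem-find-large-ball} (i.e.\ that deleting $o(n/\log^2 n)$ vertices from an expander with $\delta\ge d$ leaves an expander with $\delta\ge d/2$, which follows from Definition~\ref{def-expander} by absorbing the deleted set into the forbidden edge set $F$ and noting $\eps(x)$ changes by a constant factor under halving $n$), and (b) the cumulative forbidden set $W_t$ stays below the $\eps_1 n/(100\log^2 n)$ threshold throughout all $n^{1/8}$ rounds — this is exactly the computation $n^{1/8}\cdot n^{1/8}\cdot\exp(O(\sqrt{\log n}\log\log n))=n^{1/4+o(1)}=o(n/\log^2 n)$ done above. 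Neither is deep; it is just bookkeeping with the crude bound $\Delta(G')\le\log^{100}n$ coming from $L$ having been removed. Once these are in place the greedy construction runs for the full $n^{1/8}$ steps and delivers the claimed sets.
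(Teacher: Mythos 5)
Your overall strategy is the same as the paper's: build the $B_i$ one at a time (the paper phrases it as a maximal collection plus a contradiction, you phrase it as a greedy loop, which is the same thing), forbid the radius-$4\sqrt{\log n}$ ball around everything chosen so far, bound that ball by $(sn^{1/8}+\log n)\cdot\Delta(G')^{4\sqrt{\log n}}=n^{1/4+o(1)}$ using $\Delta(G')\le\log^{100}n$, and invoke Lemma~\ref{lem-find-large-ball} to get the next set. However, there are two concrete problems in your write-up.

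First, the detour through $G''$ is both unnecessary and not actually justified. You need an expander to feed into Lemma~\ref{lem-find-large-ball}, and neither of your two justifications that $G''$ is one holds up: Theorem~\ref{thm-pass-to-expander} applied to $G''$ only returns \emph{some} expander subgraph $H$ with $d(H)\ge d(G'')/2$ --- it gives no control on $|H|$ and $H$ need not contain $V(C)$ or the previously chosen $B_s$, so it is useless here; and the ``absorb the deleted vertices into $F$'' argument fails because the deleted set $V(\cP)\setminus V'$ contains $L$ (in Case 2 we have $L\subseteq V(\cP)$), whose vertices may have degree up to $n-1$, so the edges lost can vastly exceed the budget $d(G)\cdot\eps(|X|)|X|$ in Definition~\ref{def-expander}; moreover for sets $X$ of size as small as $\eps_1 d/2=O(1)$, deleting $\log^5 n$ vertices can swallow the entire required neighbourhood, so $G''$ need not satisfy the definition at all. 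The paper sidesteps all of this: Lemma~\ref{lem-find-large-ball} is applied to $G$ itself, and $V(\cP)$ is simply placed into the forbidden set $W$ alongside the ball $B_{G'}^{4\sqrt{\log n}}(\cup_i B_i\cup V(C))$; the lemma explicitly tolerates any $W$ of size up to $\eps_1 n/(100\log^2 n)$, and your own size computation shows the total is at most $n^{1/4+o(1)}$. You should do the same and delete $G''$ from the argument entirely.

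Second, and independently, your final distance assertion is backwards. You take the ball $W_t=B_{G''}^{4\sqrt{\log n}}(Z_t)$ in $G''$ and conclude that a vertex outside it is at distance greater than $4\sqrt{\log n}$ from $Z_t$ ``in $G''$, hence also in $G'$''. Since $G''\subseteq G'$, distances can only \emph{decrease} when passing from $G''$ to $G'$ (a short $G'$-path may use the trimmed vertices), so separation in $G''$ does not imply separation in $G'$, which is what the claim requires (and what the later steps of Lemma~\ref{lem-main} use). The fix is to take the ball in $G'$, as the paper does; the size bound is unchanged because $\Delta(G')\le\log^{100}n$. Both issues are repairable, but as written the proof does not establish the claim.
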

\begin{poc}
	Take a maximal collection of sets $B_i$, $i\in[s]$, with the claimed size, which are pairwise far apart and far from $V(C)$ in $G'$. If $s<n^{1/8}$, then
	$$\big|B_{G'}^{4\sqrt{\log n}}(\cup_{i\in[s]}B_i\cup V(C))\big|\leq 2\cdot (s n^{1/8}+\log n)\cdot \Delta(G')^{4\sqrt{\log n}} <\sqrt{n}.$$
    Then by Lemma~\ref{lem-find-large-ball}	with $W_{\ref{lem-find-large-ball}}=V(\cP)\cup B_{G'}^{4\sqrt{\log n}}(\cup_{i\in[s]}B_i\cup V(C))$, we can find another large set with small diameter far apart from $\cup_{i\in[s]}B_i\cup V(C)$ in $G'$, a contradiction.
\end{poc}

Let $B_i, i\in[n^{1/8}]$, be the  sets from the above claim. For each $i\in[n^{1/8}]$, let $B_i'\subseteq B_i$ be a connected subset of size $\log^{10}n$, and set $\cB=\{B_i'\}_{i\in[n^{1/8}]}$. Let $\mathcal{P}'$ be a maximal collection of paths in $G'$ from $V'$ to $V(\cB)$ such that
\begin{itemize}\itemsep=0pt
	\item each $v\in V'$ is in at most two paths of $\mathcal{P}'$ and each set in $\cB$ is linked to at most one path;
	\item all paths are pairwise disjoint outside of $V'$ with internal vertices in $V(G')\setminus V(\cB)$;
	\item the length of each path is at most $10m$.
\end{itemize}
Subject to $|\mathcal{P'}|$ being maximal, let $\ell(\cP'):=\sum_{P\in\mathcal{P'}}\ell(P)$ be minimised.

Suppose, for contradiction, that there is some $v\in V'$ which is in less than two paths in $\mathcal{P}'$. Let $\cB'\subseteq \cB$ be the collection of sets linked to some path in $\cP'$, then $|V(\cB')|\le 2|V'|\cdot \log^{10}n\le \log^{12} n$. Let $U'':=V(C)\cup V(\cP)\setminus \{v\}$ and let $U':=V(C)\cup V(\cP)\cup V(\cP')\setminus \{v\}$, so $|U'|\le \log n+2k\cdot 10m\le \log^5n$. Note that $|U'|+|V(\cB')|<|\cB|/2$, thus taking the subfamily $\cB''\subseteq \cB$ of connected sets that are disjoint from $U'\cup V(\cB')$, we have $|\cB''|\ge |\cB|/2$. 

By the choice of $V'$, the vertex $v\in V'$ is in less than 2 paths in $\cP$. Thus, by Lemma~\ref{lem:link-L}~(i), $U''$ is $(10,2)$-thin around $v$ in $G$. On the other hand, by the minimality of $C$ and $\ell(\cP')$, as in~\eqref{eq-lim-contact-with-P} and~\eqref{eq:far-path}, we have for any $i\in\bN$, for any path $P\in\cP'$, and for each path $\tilde{P}\in\cP'$ whose endvertex $\tilde{v}$ in $C$ is not in $B_C^{4i}(v)$ (if such a path exists) that
$$|N_G(B_{G-U'}^{i-1}(v))\cap V(P)|\leq i \quad\text{ and }\quad	N_G(B_{G-U'}^{i-1}(v))\cap \tilde{P}=\varnothing.$$
Thus, writing $\mathcal{P}'_{4i}$ for the set of paths in $\mathcal{P}'$ whose endvertices in $C$ are in $B_C^{4i}(v)$, we get that
\begin{equation*}
	\big|\big(N_G(B_{G-U'}^{i-1}(v))\big)\cap V(\mathcal{P}')\big|=\Big|\bigcup_{P\in\mathcal{P}'_{4i}}\Big(N_G(B_{G-U'}^{i-1}(v))\cap V(P)\Big)\Big|\leq 2\cdot 4i\cdot i= 8 i^2.
\end{equation*}
This, together with the fact that $U''$ is $(10,2)$-thin around $v$ in $G$, implies that
\begin{align*}
	\big|\big(N_G(B_{G-U'}^{i-1}(v))\big)\cap U'\big| &\le 	\big|\big(N_G(B_{G-U'}^{i-1}(v))\big)\cap U''\big| +	\big|\big(N_G(B_{G-U'}^{i-1}(v))\big)\cap V(\cP')\big| \\
	&\leq 	\big|\big(N_G(B_{G-U''}^{i-1}(v))\big)\cap U''\big| +8i^2\leq 18 i^2,
\end{align*}
where the second inequality holds as $B_{G-U'}^{i-1}(v)\subseteq B_{G-U''}^{i-1}(v)\subseteq V(G)\setminus U''$. 

Thus,
$U'$ is $(18,2)$-thin around $v$ in $G$. Then as in~\eqref{eq-use-lem-exp-case1}, we can expand $v$ in $G-U'\subseteq G'$. Furthermore, by Claim~\ref{cl:far-apart-balls}, $B_{G-U'}^r(v)$ is disjoint from $V(\cB)$. As $U'\cup V(\cB')$ is much smaller than $V(\cB'')$ and $B_{G-U'}^r(v)$, we can find a path of length at most $m$ in $G-U'-V(\cB')$ between $V(\cB'')$ and $B_{G-U'}^r(v)$ using Lemma~\ref{lem-short-diam}. Extend this path to $v$ yields one more path between $V'$ and $V(\cB)$, contradicting the maximality of $\cP'$.

Therefore, each vertex in $V'$ is in exactly two paths in $\mathcal{P}'$, and by construction, the non-$V'$ endvertices of $\cP'$ (each in some set $B_i$) are pairwise at least $\sqrt{\log n}$ apart in $G'$. Thus, picking appropriate neighbourhoods of endvertices of $\cP$ in $L$, together with $\cP'$ and $\cB'$, yields the second alternative.
\end{proof}

\end{document}